\documentclass[11pt,twoside,a4paper,english]{amsart}
\usepackage{babel}
\usepackage[utf8]{inputenc}
\usepackage{amscd}
\usepackage{pstricks}
\usepackage{amsmath,amsthm,amssymb}
\usepackage{mathrsfs}
\usepackage{graphicx,psfrag,epsfig}
\RequirePackage{pst-all}
\usepackage{mathrsfs}
\usepackage{float}
\usepackage{hyperref}


\def\pg{\leaders\hbox to 5mm{\hfil.\hfil}\hfill}

\def\text#1{\,\hbox{#1}\;}

\reversemarginpar


\newcounter{paraga}

\def\Bbibitem#1#2{\bibitem[#1]{#2}}

\def\frac#1#2{{\textstyle {#1\over#2}}}

\theoremstyle{plain}

\newtheorem{thm}{Theorem}
\newtheorem{lem}{Lemma}[section]

\newtheorem{prop}{Proposition}[section]
\newtheorem{Def}{Definition}[section]

\newtheorem{cor}{Corollary}[section]

\def\norm#1{\left\Vert#1\right\Vert}

\def\M{{\mathscr M}}
\def\G{{\mathscr G}}

\def\jC{{\mathscr C}}

\def\jD{{\mathscr D}}
\def\jH{{\mathscr H}}

\def\O{{\mathscr O}}

\def\U{{\mathscr U}}

\def\jV{{\mathscr V}}

\def\N{{\Bbb N}}
\def\R{{\Bbb R}}

\def\htop{{\rm h_{top}}}

\def\htop{{\rm h_{\rm top}\,}}

\title[Genericity of weak-mixing measures]{Genericity of weak-mixing measures on geometrically finite manifolds}
\author{Kamel Belarif}
\curraddr{LMBA - 6 Avenue Victor Le Gorgeu \newline%
\indent 29238, Brest}%
\email{kamel.belarif@univ-brest.fr}%

\begin{document}

\begin{abstract}
Let $M$ be a manifold with pinched negative sectional curvature. We show that when $M$ is geometrically finite 
and the geodesic flow on $T^1 M$ is topologically mixing then the set of mixing invariant measures is dense in the set
$\M^1(T^1M)$ of invariant probability measures. This implies that the set of weak-mixing measures which are invariant by 
the geodesic flow is a dense $G_{\delta}$ subset of $\M^1(T^1 M)$. We also show how to extend these results
to manifolds with cusps or with constant negative curvature.
\end{abstract}
\maketitle


\section{Introduction}

Let $\widetilde{M}$ be a complete, simply connected manifold with pinched sectional curvature (\textit{i.e} there exists $b > a > 0$ such that $-b^2 \leq \kappa 
\leq -a^2$) and $\Gamma$ a non elementary group of isometries of $\widetilde{M}$. We will denote by $M$ the quotient manifold $\widetilde{M} / \Gamma$
and $\phi_t$ the geodesic flow on the unit tangent bundle $T^1 M$. The interesting behavior of this flow occurs on its non-wandering set $\Omega$.\\
Let us recall a few definitions related to the mixing property of the geodesic flow.

\begin{enumerate}
 \item $\phi_t$ is topologically mixing if for all open sets $\U, \jV \subset \Omega$ there exists $T>0$ such that 
\begin{center}
 $\phi_t(\U) \cap \jV \neq \emptyset$ for all $|t| >T$,
\end{center}
\item Given a finite measure $\mu$ the geodesic flow is mixing with respect to $\mu$ if for all $f \in L^2(T^1 M,\mu)$,
$$ \lim\limits_{t \to \infty} \int_{T^1M} f \circ \phi_t \cdot f d\mu = \left( \int_{T^1 M} f d\mu \right)^2, $$
\item it is weakly-mixing if for all continuous function $f$ with compact support we have
 $$ \lim\limits_{T \to \infty} \frac{1}{T} \int_0^T \left| { \int_{T^1 M} f \circ \phi_t (x)f(x) d\mu(x) - \left( \int_{T^1 M} f d\mu \right) ^2 } \right| dt = 0.$$
\end{enumerate}

Finally, let us recall what is the weak topology: a sequence $\mu_n$ of probability measures converges to a probability measure $\mu$ if for all bounded
continuous functions $f$, 
$$\int_{T^1 M} f d\mu_n \xrightarrow{n \to \infty} \int_{T^1 M} f d\mu.$$\\

In this article, we will show that the weak-mixing property is generic in the set $\M^1(T^1 M)$ of probability measures invariant by the geodesic flow and supported
on $\Omega$. We endow $\M^1(T^1 M)$ with the weak topology.\\
In \cite{Si72}, K. Sigmund studies this question for Anosov flows defined on compact manifolds and shows
that the set of ergodic probability measures is a dense $G_{\delta}$ set (\textit{i.e} a countable intersection of dense sets) in $\M^1 (T^1 M)$.
On non-compact manifolds, the question has been studied by Y. Coudène and B. Schapira in \cite{MR2735038} and \cite{MR3322793}. It appears that ergodicity and zero entropy are typical 
properties for the geodesic flow on negatively curved manifolds.\\
Since the set of mixing measures with respect to the geodesic flow is contained in a meager set (see \cite{MR3322793}) it is natural to consider the set
of weak-mixing measures from the generic point of view.\\

Let us recall that a manifold $M$ is geometrically finite if it is negatively curved, complete and has finitely many ends, all of which
are cusps of finite volume and funnels.\\

Here is our main result.
\begin{thm}\label{thm:princ}
 Let $M$ be a geometrically finite manifold with pinched curvature and $\phi_t$ the geodesic flow defined on its unit tangent bundle $T^1 M$.\\
 If $\phi_t$ is topologically mixing on $\Omega$, then the set of mixing probability measures invariant by the geodesic flow is dense in $\M^1 (T^1 M)$ 
 for the weak topology.
\end{thm}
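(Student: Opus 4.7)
The plan is to adapt Sigmund's classical strategy (density of ergodic measures for Anosov flows) to the geometrically finite setting, but to push it further so as to produce mixing --- and not merely ergodic --- measures. The heart of the argument is a two-step weak-$*$ approximation: first approximate an arbitrary measure in $\M^1(T^1M)$ by a periodic orbit measure, then approximate each periodic orbit measure by a genuinely mixing invariant measure.

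For the first step, given $\nu \in \M^1(T^1M)$ and a weak-$*$ neighborhood $\U$ of $\nu$, I would invoke the ergodic decomposition to reduce to the case in which $\nu$ is ergodic; Birkhoff's theorem then supplies an orbit segment of some large length $T$ whose empirical distribution is weakly close to $\nu$. The Anosov closing lemma on $\Omega$, valid thanks to pinched negative curvature, closes this segment into a periodic orbit $\gamma$ whose uniform measure $\mu_\gamma$ lies in $\U$. For a finite convex combination $\sum t_i \mu_{\gamma_i}$ of such periodic orbit measures, topological mixing lets us concatenate blocks around each $\gamma_i$ into a single long closed orbit whose invariant measure approximates the combination in the weak topology.

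For the second step, given a single periodic orbit measure $\mu_\gamma$ of period $\ell$, I would construct a mixing measure close to it by a Bernoulli-type concatenation. Using topological mixing, I would fix two transition times $\tau_0 < \tau_1$ together with orbit segments of those lengths starting and ending near a basepoint on $\gamma$. Any bi-infinite word in $\{0,1\}^{\Z}$ then determines a concatenation of loops around $\gamma$ joined by transitions of length $\tau_0$ or $\tau_1$. Equipping $\{0,1\}^{\Z}$ with a Bernoulli measure and suspending it via the associated return time produces a flow-invariant probability $\mu$ on $T^1 M$. The Bernoulli factor and the aperiodicity of the return times force $\mu$ to be mixing for $\phi_t$, while taking many loops around $\gamma$ between successive transitions keeps $\mu$ weakly close to $\mu_\gamma$.

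The main obstacle is the non-compactness of $\Omega$ arising from the cusps: a uniform specification, or even a uniform closing lemma, does not hold globally on $\Omega$, because orbits can spend unbounded time arbitrarily deep inside a cusp. The workaround is to restrict all the above constructions to a thick part $K_R \subset \Omega$ consisting of vectors whose orbit stays at bounded distance from the cusps over a large time window, and to use topological mixing on $\Omega$ to guarantee the existence of connecting segments inside $K_R$. A closing/shadowing lemma adapted to cusps, of the type developed in the work of Dal'bo--Otal--Peigné and Schapira, will be the essential technical ingredient. The remaining care is to show that the small mass carried by the cusps does not destroy the weak-$*$ approximation, and that the Bernoulli perturbation of $\mu_\gamma$ only introduces an error controlled by the cusp excursions of the approximating orbits.
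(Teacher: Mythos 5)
Your route is genuinely different from the paper's, and it is worth saying how before pointing at the gap. The paper also begins with the density of normalized periodic orbit measures (the Coud\`ene--Schapira result, which is exactly your first step), but for the second step it does not build a symbolic model: it approximates $D_{\O(p)}$ by the normalized Gibbs measures $m^{F_n}$ of potentials $F_n=\max\{c_n-c_n\,d(\cdot,\O(p));H\}$ sharply peaked on the orbit, where $H$ is a Coud\`ene-type potential making every parabolic subgroup of divergence type, so that the Gibbs measures are finite. Mixing of each $m^{F_n}$ is then automatic, since a finite Gibbs measure of a topologically mixing geodesic flow is mixing, and the convergence $m^{F_n}\rightharpoonup D_{\O(p)}$ follows from the variational principle together with the uniform entropy bound $h_{m}(\phi_t)\le\delta_\Gamma<\infty$. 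In that scheme the cusps only enter through the finiteness of the Gibbs measure, not through any specification or shadowing statement.

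The genuine gap in your proposal is exactly at the point where the theorem's content lies, namely why the constructed measure is \emph{mixing}. The sentence ``the Bernoulli factor and the aperiodicity of the return times force $\mu$ to be mixing'' is not an argument, and its first half is false as a principle: the suspension of a Bernoulli shift under a constant roof has a Bernoulli return map and is not even weak-mixing for the flow. What you need is the nontrivial theorem that equilibrium states of suspension flows over mixing shifts of finite type with non-arithmetic roof functions are mixing, applied to the compact invariant set your concatenation produces; and you must then actually verify that the period spectrum of that particular horseshoe generates a dense subgroup of $\R$. This verification is delicate: the true periods of the shadowed orbits are only perturbations of the nominal lengths $\sum_i(n_i\ell+\tau_{\varepsilon_i})$, so choosing $\tau_1-\tau_0$ rationally independent of $\ell$ is not by itself sufficient, and topological mixing of the ambient flow does not automatically descend to a sub-horseshoe. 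Without this step your construction only yields ergodic (or Bernoulli for the return map) measures, i.e.\ Sigmund's conclusion rather than the theorem's. A secondary remark: your non-compactness worries are aimed at the wrong step. The horseshoe lives in a compact neighborhood of $\O(p)$ and of the finitely many transition segments, so no thick part $K_R$ or cusp-adapted closing lemma is needed there; the cusp difficulties in the first step are already absorbed by the Coud\`ene--Schapira density result you invoke.
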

\begin{cor}\label{cor:gef}
 Let $M$ be a geometrically finite manifold with pinched negative curvature and $\phi_t$ the geodesic flow defined on its unit tangent bundle $T^1M$.\\
 If $\phi_t$ is topologically mixing on the non-wandering set of $T^1M$, then the set of invariant weak-mixing probability measures with
 full support on $\Omega$ is a dense $G_{\delta}$ subset of $\M^1 (T^1 M)$.
\end{cor}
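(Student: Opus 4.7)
The corollary should follow from Theorem~\ref{thm:princ} together with two standard topological facts and Baire's theorem. Since $T^1M$ is Polish, $\M^1(T^1M)$ equipped with the weak topology is Polish as well, and hence a Baire space; in particular a countable intersection of dense $G_\delta$ sets remains a dense $G_\delta$.

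First I would show that the set $WM$ of $\phi_t$-invariant weak-mixing probability measures is a $G_\delta$ in $\M^1(T^1M)$. The cleanest route is via the classical equivalence ``$\mu$ is weak-mixing for $\phi_t$ iff $\mu\otimes\mu$ is ergodic for the product flow $\phi_t\times\phi_t$''. Since $\mu\mapsto\mu\otimes\mu$ is continuous in the weak topology, and since the set of ergodic invariant measures of a continuous flow on a Polish space is a $G_\delta$ (Parthasarathy), this characterization makes $WM$ a $G_\delta$. Density of $WM$ is immediate from Theorem~\ref{thm:princ}, as every mixing measure is weak-mixing, so mixing measures are in particular dense in $WM$.

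Next I would treat the full-support condition. Fixing a countable basis $(U_i)_{i\ge1}$ of the topology of $\Omega$, lower semi-continuity of $\mu\mapsto\mu(U_i)$ in the weak topology shows that each $\{\mu:\mu(U_i)>0\}$ is open, and therefore
$$FS:=\{\mu\in\M^1(T^1M):\operatorname{supp}\mu=\Omega\}=\bigcap_{i\ge1}\{\mu:\mu(U_i)>0\}$$
is $G_\delta$. For density it is enough to exhibit a single invariant probability measure $\nu_0$ with $\operatorname{supp}\nu_0=\Omega$; then for any $\mu\in\M^1(T^1M)$ the convex combination $(1-\epsilon)\mu+\epsilon\nu_0$ lies in $FS$ and is $\epsilon$-close to $\mu$. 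Such a $\nu_0$ can be built as a $2^{-i}$-weighted sum of normalized Lebesgue measures on a countable family of periodic orbits dense in $\Omega$, using that periodic orbits are dense in the non-wandering set once $\phi_t$ is topologically mixing.

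Combining, $WM\cap FS$ is an intersection of two dense $G_\delta$ sets, hence a dense $G_\delta$ itself, which is exactly the statement. The only delicate point is the $G_\delta$ step for $WM$, which rests on the ``ergodic $=G_\delta$'' fact holding for Polish (not merely compact) phase spaces; the rest is formal given Theorem~\ref{thm:princ} and the density of periodic orbits in $\Omega$.
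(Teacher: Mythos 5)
Your proposal is correct and follows essentially the same route as the paper: write the set as the intersection of the weak-mixing measures (dense by Theorem~\ref{thm:princ} since mixing implies weak-mixing, and $G_\delta$) with the fully supported measures (a dense $G_\delta$), then apply Baire. The only difference is that where you sketch self-contained arguments (product-ergodicity for the $G_\delta$ property of weak mixing, and the countable-basis/convex-combination argument for full support), the paper simply cites Coud\`ene--Schapira for both facts.
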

To prove theorem \ref{thm:princ}, we will use the fact that Dirac measures supported on periodic orbits are dense in $\M^1(T^1 M)$. This comes from \cite{MR2735038}
where the result is shown for any metric space $X$ admitting a local product structure and satisfying the closing lemma.\\
The rest of the proof relies on the approximation of a single Dirac measure supported on a periodic orbit $\O(p)$ using a sequence of Gibbs
measures associated to $(\Gamma,F_n)$ where $F_n:T^1 M \to \R$ is a Hölder-continuous potential.\\
The notion of Gibbs measures which is related to the construction of \\ Patterson-Sullivan densities on the boundary at infinity of $\widetilde{M}$ will be 
recalled in $\S. 2$ .\\
In $\S. 4$ we will prove a criterion connecting the divergence of some subgroups of $\Gamma$ with the finiteness of the Gibbs measures which comes from \cite{MR1776078}
for the potential $F=0$.\\
After this, we will construct in $\S. 5$ a sequence of bounded potentials satisfying the desired property. The main step of this paragraph builds on a result of 
\cite{C04} which claims that there exists a bounded potential such that the Gibbs measure is finite.\\ 
Finally, we will prove in $\S. 6$ the convergence of the Gibbs measures using the variational principle which is recalled in $\S. 2$.\\
Now, assume that Theorem \ref{thm:princ} is true. The proof of corollary \ref{cor:gef} is a consequence of \cite{MR3322793} where it is shown that the set of weak-mixing
measures with full support is a $G_{\delta}$ subset of the set of invariant Borel probability measures supported on $\Omega$.\\

In the previous theorem, we restricted ourselves to the case of geometrically finite manifolds but we make the following conjecture: the result is still
true for non geometrically finite manifolds as soon as the geodesic flow is topologically mixing on its non wandering set.\\
The conjecture is supported by the following two results.

\begin{cor}\label{cor:geinf}
 Let $M$ be a connected, complete pinched negatively curved manifold with a cusp then the set of probability measures 
 fully supported on $\Omega$ that are weakly mixing with respect to the geodesic flow is a dense $G_{\delta}$ set of $\M^1 (T^1 M)$.
\end{cor}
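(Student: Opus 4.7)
The plan is to reduce the corollary to the geometrically finite case handled by Theorem~\ref{thm:princ}. By the results recalled from \cite{MR3322793}, the set of invariant probability measures which are both fully supported on $\Omega$ and weakly mixing is a $G_\delta$ subset of $\M^1(T^1M)$, so by Baire's theorem it is enough to establish its density. Since every mixing measure is weakly mixing and since periodic-orbit Dirac measures are dense in $\M^1(T^1M)$ by \cite{MR2735038}, it suffices to exhibit, for each closed orbit $\O(p) \subset T^1M$, a sequence of mixing invariant probability measures on $T^1M$ converging weakly to $\delta_{\O(p)}$.

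Fix such a closed orbit, realised by a hyperbolic isometry $\gamma_0 \in \Gamma$. The cusp hypothesis on $M$ provides a parabolic element $\pi \in \Gamma$ whose fixed point $\xi \in \partial\widetilde{M}$ differs from the two fixed points $\gamma_0^\pm$ of $\gamma_0$. A standard Schottky ping-pong argument, replacing $\gamma_0$ and $\pi$ by sufficiently large powers $\gamma_0^a$ and $\pi^b$ if necessary, produces a discrete free subgroup $\Gamma_0 := \langle \gamma_0^a, \pi^b \rangle \subset \Gamma$ which is geometrically finite, has $\langle \pi^b \rangle$ as its unique (up to conjugation) maximal parabolic subgroup, and such that the axis of $\gamma_0^a$ in $\widetilde{M}$ coincides with that of $\gamma_0$. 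The quotient $M_0 := \widetilde{M}/\Gamma_0$ is therefore a geometrically finite pinched negatively curved manifold with one cusp; by the classical mixing theorem of Dal'bo--Otal--Peigné, in its form extended to pinched negative curvature, the geodesic flow on the non-wandering set of $T^1M_0$ is topologically mixing.

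Applying Theorem~\ref{thm:princ} to $M_0$, the set of mixing invariant probability measures is dense in $\M^1(T^1M_0)$. In particular, there exists a sequence $(\mu_n)_{n \in \N}$ of mixing measures converging weakly to $\delta_{\O(q)}$, where $\O(q) \subset T^1M_0$ is the closed orbit associated with $\gamma_0^a$. Let $p_0 : T^1M_0 \to T^1M$ denote the smooth covering projection induced by $\Gamma_0 \subset \Gamma$; it is continuous and intertwines the two geodesic flows. The pushforwards $\nu_n := (p_0)_* \mu_n$ are flow-invariant probability measures on $T^1M$, and a direct computation using the intertwining shows that they remain mixing, since for $f, g \in L^2(\nu_n)$ one has $\int (f \circ \phi_t) g \, d\nu_n = \int (f \circ p_0 \circ \phi_t)(g \circ p_0) \, d\mu_n$, and the right-hand side factorises in the limit by mixing of $\mu_n$. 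Furthermore $p_0$ sends $\O(q)$ onto $\O(p)$ (covering it $a$ times at constant speed), so $(p_0)_* \delta_{\O(q)} = \delta_{\O(p)}$, and weak continuity of the pushforward yields $\nu_n \to \delta_{\O(p)}$ in $\M^1(T^1M)$, as required.

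The main obstacle I anticipate is verifying that the Schottky subgroup $\Gamma_0$ enjoys all the required properties simultaneously: discreteness, geometrical finiteness, control of the set of parabolic fixed points (to guarantee that $M_0$ has precisely one cusp and that the corresponding geodesic flow is topologically mixing), and preservation of the axis of $\gamma_0$. These points are classical in constant curvature and have been extended to pinched negatively curved geometries; a careful choice of disjoint attractive and repelling neighbourhoods on the boundary $\partial\widetilde{M}$ underlies the construction. It is this geometric input, rather than the measure-theoretic transfer, that carries the weight of the argument.
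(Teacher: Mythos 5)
Your proposal is correct and follows essentially the same route as the paper: a ping-pong (Schottky) construction pairing the parabolic element from the cusp with the hyperbolic element realising the given closed orbit yields a geometrically finite subgroup $\Gamma_0\subset\Gamma$ whose quotient has topologically mixing geodesic flow, after which Theorem~\ref{thm:princ} is applied upstairs and the mixing measures are pushed forward along the covering $T^1(\widetilde{M}/\Gamma_0)\to T^1M$. If anything, you are more explicit than the paper about choosing the hyperbolic generator to be the one carrying $\O(p)$ and about why the normalised Dirac measure pushes forward to $D_{\O(p)}$.
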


\begin{cor}\label{cor:surf}
 Let $S$ be a pinched negatively curved surface or a manifold with constant negative curvature then the set of probability measures
 fully supported on $\Omega$ that are weakly mixing with respect to the geodesic flow is a dense $G_{\delta}$ set of $\M^1 (T^1 M)$.
\end{cor}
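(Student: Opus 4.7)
The $G_\delta$-character of the set of invariant weak-mixing probability measures with full support on $\Omega$ is established in \cite{MR3322793} in the general pinched negatively curved setting, without any geometric-finiteness hypothesis, so the substance of Corollary \ref{cor:surf} lies in proving density of mixing measures in $\M^1(T^1 M)$ for the weak topology.

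My plan is to dispatch the cusp case and the geometrically finite case first, and then to treat the remaining situation. If $M$ has a cusp, Corollary \ref{cor:geinf} applies directly. If $M$ is geometrically finite, then for a pinched negatively curved surface (by a theorem of Dal'bo on the non-arithmeticity of the length spectrum) or for a constant-curvature manifold (by classical arguments of Sullivan, Rudolph, and others), the geodesic flow is automatically topologically mixing on its non-wandering set; in this situation Corollary \ref{cor:gef} gives the conclusion. Thus the only remaining case is a geometrically infinite $M$ without cusps, and the rest of the argument addresses this case.

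Here \cite{MR2735038} furnishes the density of Dirac measures on periodic orbits in $\M^1(T^1 M)$, so it is enough to approximate a single such Dirac $\delta_{\O(p)}$ by mixing measures. Given $\O(p)$, fix a hyperbolic $g \in \Gamma$ whose axis projects to $\O(p)$. I would then construct a convex-cocompact Schottky subgroup $\Gamma' \subset \Gamma$ containing $g$ by a ping-pong argument on the boundary at infinity of $\widetilde M$; this construction is standard in dimension two and in constant negative curvature. The quotient $M' = \widetilde M / \Gamma'$ is then geometrically finite without cusps, and its geodesic flow is topologically mixing on $\Omega'$ (again by the results of Dal'bo and by the classical arguments alluded to above). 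Theorem \ref{thm:princ} applied to $M'$ yields a sequence of mixing probability measures $\mu_n$ on $T^1 M'$ converging weakly to the Dirac $\delta_{\O(p)}$ in $\M^1(T^1 M')$.

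Finally, I would transfer the $\mu_n$ via the locally isometric covering $\pi : T^1 M' \to T^1 M$ induced by $\Gamma' \hookrightarrow \Gamma$. By choosing the Schottky generators of $\Gamma'$ with sufficiently large translation length, one can arrange that $\pi$ is injective on the convex core of $M'$, and hence on the supports of the $\mu_n$; the push-forward $\pi_\ast \mu_n$ is then a mixing probability measure on $T^1 M$, and these push-forwards still converge weakly to $\delta_{\O(p)}$ in $\M^1(T^1 M)$. The principal technical obstacle, where I expect the main difficulty, is simultaneously controlling the Schottky construction and the injectivity of $\pi$ on the convex core of $M'$; both steps are standard in dimension two and in constant negative curvature, but no clean substitute is available in the general pinched, variable-curvature, higher-dimensional setting, which is precisely why the conjecture preceding this corollary is not settled here.
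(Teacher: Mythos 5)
Your argument is essentially the paper's: reduce to approximating Dirac measures on periodic orbits, produce a convex-cocompact Schottky subgroup $\Gamma_0=\langle h_1^N,h_2^N\rangle$ of $\Gamma$ by ping-pong (whose quotient is geometrically finite and, in dimension $2$ or constant curvature, has topologically mixing geodesic flow by Lemma \ref{lem:mix}), apply Theorem \ref{thm:princ} upstairs, and push the resulting mixing measures forward through the covering $T^1(\widetilde M/\Gamma_0)\to T^1M$. The only thing to drop is the injectivity requirement on the convex core, which you single out as the principal obstacle: it is not needed, since the push-forward of a mixing measure under a map intertwining the two flows is automatically mixing (this semiconjugacy argument is exactly what the paper uses in Corollary \ref{cor:geinf}), and it is in any case generally unachievable, as the lifted closed geodesic already wraps $N$ times over $\O(p)$ when $\Gamma_0$ only contains a power of the primitive element whose axis lies over $\O(p)$ --- which is harmless because the normalized Dirac measure upstairs still pushes forward to $D_{\O(p)}$.
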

Let $M$ be a manifold such that $dim(M)= 2$, $\kappa_M = -1$ or $M$ possesses a cusp and denote by $\phi_t$ the geodesic flow on $T^1M$. We will show in $\S.7$ that
we can find a geometrically finite manifold $\hat{M}$ on which theorem \ref{thm:princ} applies and for which $\phi_t$ is a factor 
of the geodesic flow $\hat{\phi}_t$ on $T^1 \hat{M}$.\\

One way to confirm the conjecture is to find a positive answer to the following question:\\
Let $M$ be a connected, complete manifold with pinched negative curvature. We will suppose that $M$ is not geometrically finite and has no 
cusp. Does there exist a potential $F: T^1 M \to \R$ such that the Gibbs measure associated with $(\Gamma, F)$ is finite?

\section{Preliminaries}
\subsection{Geometry on $T^1 \widetilde{M}$}
We first recall a few notations and results related to the geometry of negatively curved manifolds.\\
Let $\partial_{\infty} \widetilde{M}$ be the boundary at infinity of $\widetilde{M}$, we define the limit set of $\Gamma$ by
$$\Lambda \Gamma = \overline{ \Gamma x} \cap \partial_{\infty} \widetilde{M},$$
where $x$ is any point of $\widetilde{M}$.\\
An element $\xi_p \in \Lambda \Gamma$  is \textbf{parabolic} if there exists a parabolic isometry $\gamma \in \Gamma$ satisfying $\gamma \xi = \xi$.\\
A parabolic point $\xi \in \Lambda \Gamma$ is \textbf{bounded} if $\Lambda \Gamma / \Gamma_{\xi_p}$ is compact where $\Gamma_{\xi_p}$ is the maximal subgroup of $\Gamma$
fixing $\xi_p$. In this case, let $\jH_{\xi}$ be a horoball
centered at $\xi$ then,
$$ \jC_{\xi} = \jH_{\xi} / \Gamma_{\xi_p} $$
is called the cusp associated with $\xi$.\\
We say that $\Lambda_c \Gamma \subset \Lambda \Gamma$ is the conical limit set if for all $\xi \in \Lambda_c \Gamma$ for some 
$\widetilde{x} \in \widetilde{M}$ there exists $\epsilon >0$ and a sequence $(\gamma_n)_{n \in \N}$ such that $(\gamma_n (\widetilde{x}))_{n \in \N}$
converges to $\xi$ and stays at distance at most $\epsilon$ from the geodesic ray $(\widetilde{x} \xi)$.\\
We define the parabolic limit set as follows.

$$\Lambda_p \Gamma = \{ \eta \in \Lambda \Gamma : \exists \gamma \in \Gamma \text{ parabolic }, \gamma \cdot \eta = \eta \}$$

Let us choose an origin $\widetilde{x}_0$ in $\widetilde{M}$ once and for all.
We define the Dirichlet domain of $\Gamma$, centered on $\widetilde{x}_0$ as follows.
$$ \jD = \bigcap\limits_{\gamma \in \Gamma, \gamma \neq Id} \{ \widetilde{x} \in \widetilde{M} : d(\widetilde{x},\widetilde{x}_0) \leq d(\widetilde{x},\gamma \widetilde{x}_0) \}.$$

It is a convex domain having the following properties.
\begin{itemize}
 \item $\bigcup\limits_{\gamma \in \Gamma} \gamma \jD = \widetilde{M},$
 \item for all $\gamma \in \Gamma \backslash \{Id\}, \mathring{\jD} \cap \gamma \mathring{\jD} = \emptyset.$
\end{itemize}

We define the diagonal of $\Lambda \Gamma \times \Lambda \Gamma$ the set of points $(x,y) \in \Lambda \Gamma \times \Lambda \Gamma$ such that $x=y$.
We denote by $\Delta$ this set.\\
For all $\xi, \eta \in \partial_{\infty} \widetilde{M}$, we denote by $(\xi \eta)$ the geodesic joining $\xi$ to $\eta$. We define the lift of the non
wandering set on $T^1 \widetilde{M}$ by
$$\widetilde{\Omega} = \{\widetilde{x} \in T^1 \widetilde{M}: \exists (\xi,\eta)\in \Lambda \Gamma \times \Lambda \Gamma \backslash \Delta, \text{  } \widetilde{x} \in
 (\xi\eta) \}.$$
Let $\pi:T^1\widetilde{M} \to \widetilde{M}$ be the natural projection of the unit tangent bundle to the associated manifold. We denote by $\jC\Lambda\Gamma$ the 
smallest convex set in $\widetilde{M}$ containing $\pi(\widetilde{\Omega})$.

$M$ is geometrically finite if one of the following equivalent conditions is satisfied
\begin{enumerate}
 \item $ \Lambda \Gamma = \Lambda_c \Gamma \cup \Lambda_p \Gamma $ = $\Lambda_c \Gamma \cup \{ \text{bounded parabolic fixed points} \}$,
 \item For some $\epsilon >0$, the $\epsilon-$neighborhood of $\jC \Lambda\Gamma / \Gamma$ has finite volume,
 \item $M$ has finitely many ends, all of which are funnels and cusps of finite volume.
\end{enumerate}

We define a map
$$ C_F: \partial_{\infty} \widetilde{M} \times \widetilde{M}^2 \to \R $$
called the Gibbs cocycle of $(\Gamma,F)$ by
$$ C_{F,\xi}(x,y)= C_F (\xi,x,y) = \lim\limits_{t \to \infty} \int_y^{\xi(t)}\widetilde{F} - \int_x^{\xi(t)} \widetilde{F}$$
where $t \mapsto \xi(t)$ is any geodesic ending at  $\xi$.\\

Here is a technical lemma of \cite{PPS} giving estimates for the Gibbs cocycle.
\begin{prop}\label{prop:major}(\cite{PPS})
 For every $r_0>0$, there exists
$c_1,c_2,c_3,c_4>0$ with $c_2,c_4\leq 1$ such that the following assertions hold.

(1) For all $x,y\in \widetilde{M}$ and $\xi\in\partial_\infty \widetilde{M}$,
$$
|\;C_{F,\,\xi}(x,y)\;|\leq 
\;c_1\,e^{d(x,\,y)} \;+\; d(x,y)\max_{\pi^{-1}(B(x,\,d(x,\,y)))}|\widetilde{F}|\;,
$$
and if furthermore $d(x,y)\leq r_0$, then 
$$
|\;C_{F,\,\xi}(x,y)\;|\leq
c_1\,d(x,y)^{c_2}+ d(x,y)\max_{\pi^{-1}(B(x,\,d(x,\,y)))}|\widetilde{F}|\;.
$$

(2) For every $r\in\mathopen{[}0,r_0\mathclose{]}$, for all $x,y'$ in
$\widetilde{M}$, for every $\xi$ in the shadow $\O_xB(y',r)$ of the ball
$B(y',r)$ seen from $x$, we have
$$
\Big|\;C_{F,\,\xi}(x,y')+\int_x^{y'} \widetilde{F}\;\Big|\leq 
c_3 \;r^{c_4}+2r\max_{\pi^{-1}(B(y',\,r))}|\widetilde{F}|\;. 
$$
\end{prop}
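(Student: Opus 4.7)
The plan is to exploit the exponential convergence of geodesic rays with a common endpoint at infinity, a standard feature of manifolds with pinched negative curvature. Fix $x, y \in \widetilde{M}$ and $\xi \in \partial_\infty \widetilde{M}$, and let $\gamma_x, \gamma_y$ be the arc-length parametrized geodesic rays from $x, y$ to $\xi$. By comparison with constant curvature $-a^2$, after a synchronization time of order $d(x,y)$ the two rays track each other exponentially: there exist constants depending only on $a, b$ such that, for $s$ large, $\gamma_y(s)$ lies within distance $O(d(x,y)\, e^{-as})$ of the point on $\gamma_x$ on the same horosphere centered at $\xi$.

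For assertion (1), decompose $\int_y^{\xi(t)} \widetilde{F}$ into an initial segment of length $\approx d(x,y)$ taking $y$ to a point $p$ close to $\gamma_x$, plus a tail that runs essentially parallel to $\gamma_x$ until $\xi(t)$. The initial segment contributes at most $d(x,y)\, \max_{\pi^{-1}(B(x, d(x,y)))} |\widetilde{F}|$, after enlarging the ball by a universal factor coming from the geometry. The tail is compared termwise with the corresponding tail of $\int_x^{\xi(t)} \widetilde{F}$ starting at the point of $\gamma_x$ closest to $p$: using the Hölder regularity of $F$ together with the exponential tracking, each unit interval contributes at most a constant times $d(x,y)^{c_2} e^{-a c_2 k}$, and summing over $k \geq 0$ produces the $c_1 d(x,y)^{c_2}$ bound in the small-scale regime $d(x,y) \leq r_0$. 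When $d(x,y)$ is large and the small-scale Hölder step is not useful, one replaces it by a crude $\mathrm{CAT}(-a^2)$ bound on the distance between corresponding points of the two rays, growing at most like $e^{d(x,y)}$ and producing the $c_1 e^{d(x,y)}$ term.

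For assertion (2), since $\xi \in \O_x B(y', r)$, the geodesic from $x$ to $\xi$ enters $B(y', r)$; pick $z$ on this geodesic with $d(z, y') \leq r$. The cocycle identity $C_{F, \xi}(x, y') = C_{F, \xi}(x, z) + C_{F, \xi}(z, y')$, combined with $C_{F, \xi}(x, z) = -\int_x^z \widetilde{F}$ (valid because $z$ lies on the ray from $x$ to $\xi$), gives
\[
C_{F, \xi}(x, y') + \int_x^{y'} \widetilde{F} = \int_z^{y'} \widetilde{F} + C_{F, \xi}(z, y').
\]
The first summand is bounded by $r\, \max_{\pi^{-1}(B(y', r))} |\widetilde{F}|$, and the second by assertion (1) applied to $(z, y')$ with $d(z, y') \leq r \leq r_0$, producing the stated bound with $c_3 = c_1$ and $c_4 = c_2$.

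The main obstacle is the exponential tracking estimate in pinched rather than constant negative curvature, where the rays $\gamma_x, \gamma_y$ admit no closed-form description and the comparison must be carried out via Rauch-type inequalities or the $\mathrm{CAT}(-a^2)$ property. A further delicate point is arranging the synchronization time and the unit-interval partition so that the constants $c_1, c_2, c_3, c_4$ depend only on $a, b, r_0$, and the Hölder data of $F$, and not on $x, y, \xi$ or on $d(x, y)$ itself.
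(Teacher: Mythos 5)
A preliminary remark: the paper does not prove this proposition at all --- it is imported verbatim from \cite{PPS} --- so the only available comparison is with the proof given there. Your overall strategy (exponential convergence of asymptotic geodesic rays in pinched negative curvature, H\"older regularity of $\widetilde F$ to sum the tail, a length-times-supremum bound for an unmatched initial segment, and for (2) a reduction to (1) via a point $z\in [x,\xi)\cap B(y',r)$) is exactly the strategy of \cite{PPS}. There is, however, a concrete error in your proof of (2): the identity $C_{F,\xi}(x,y')+\int_x^{y'}\widetilde F=\int_z^{y'}\widetilde F+C_{F,\xi}(z,y')$ is false. From $C_{F,\xi}(x,z)=-\int_x^z\widetilde F$ and the cocycle relation one only gets
\[
C_{F,\xi}(x,y')+\int_x^{y'}\widetilde F \;=\; C_{F,\xi}(z,y')+\Bigl(\int_x^{y'}\widetilde F-\int_x^{z}\widetilde F\Bigr),
\]
and the bracketed difference is \emph{not} $\int_z^{y'}\widetilde F$: the three integrals run along three distinct geodesic segments, and integrals of $\widetilde F$ along geodesics satisfy no Chasles relation for non-collinear points. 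That difference needs its own comparison of the two geodesics from $x$ to the nearby endpoints $y'$ and $z$ (convexity and H\"older continuity for the common part, plus a leftover arc of length at most $r$ near $y'$); this is in fact where the second factor of $r$ in the bound $2r\max_{\pi^{-1}(B(y',r))}|\widetilde F|$ comes from. The gap is repairable with the tools you already invoke, but as written the proof of (2) does not close.

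Two further quantitative points in (1). Your unmatched initial segment runs from $y$ along the ray to $\xi$, so it lies in $B(y,d(x,y))\subset B(x,2d(x,y))$, whereas the statement asserts the ball $B(x,d(x,y))$; the clean fix is to split at the point $p$ of the geodesic through $x$ asymptotic to $\xi$ lying on the horosphere through $y$, so that $C_{F,\xi}(x,p)=\pm\int_x^p\widetilde F$ with $[x,p]\subset B(x,d(x,y))$ exactly, and then compare the two Busemann-synchronized rays issued from $p$ and from $y$. Second, when $d(x,y)$ is large the tracking distance between the synchronized rays exceeds $1$ for a time comparable to $d(x,y)$, on which the local H\"older estimate does not apply and on which $\max|\widetilde F|$ is not controlled by the stated ball; one needs the observation that a locally H\"older map has at most linear growth of oscillation, $|\widetilde F(v)-\widetilde F(w)|\le c\,(1+d(v,w))$, to convert the exponentially decaying tracking distance into the term $c_1e^{d(x,y)}$ with a constant $c_1$ independent of $\sup|\widetilde F|$. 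Neither point is fatal, but both are needed to obtain the inequalities in the exact form quoted.
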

\subsection{Thermodynamic formalism for negatively curved manifolds}
We start by recalling a few facts on Gibbs measures on negatively curved manifolds. The results of this paragraph come from \cite{PPS}.

Let $\widetilde{F}: T^1 \widetilde{M} \to \R$ be a $\Gamma-$invariant Hölder function. We will say that the induced function $F$ on
$T^1 M = T^1 \widetilde{M} / \Gamma$ is a potential.\\

The Poincaré series associated with $(\Gamma,F)$ is defined by
$$ P_{x,\Gamma,F} (s) = \sum\limits_{\gamma \in \Gamma} e^{\int_x^{\gamma x}\widetilde{F}-s}.$$
Its critical exponent is given by
$$ \delta_{\Gamma,F} = \limsup\limits_{n \to \infty} \frac{1}{n} \log (\sum\limits_{\gamma \in \Gamma, n-1 \leq d(x,\gamma x)\leq n} 
e^{\int_x^{\gamma x} \widetilde{F}}).
$$
We say that $(\Gamma,F)$ is of divergence type if\\ $P_{x,\Gamma,F}(\delta_{\Gamma,F})$ diverges.\\
When $F=0$ on $T^1M$, we will denote by $\delta_{\Gamma}$ the critical exponent associated to $(\Gamma,F)$.

\begin{prop} \label{prop:exp}
 Let $F$ be the potential on $T^1 M = (T^1 \widetilde{M}) / \Gamma$ induced by the $\Gamma-$invariant potential
 $\widetilde{F}:T^1 \widetilde{M} \to \R$.
 \begin{enumerate}
  \item The Poincaré series associated with $(\Gamma,F)$ converges if $s >\delta_{\Gamma,F}$ and diverges if $s<\delta_{\Gamma,F}$,
  \item We have the upper bound
  $$ \delta_{\Gamma,F} \leq \delta_{\Gamma} + \sup\limits_{\pi^{-1}(\jC \Lambda \Gamma)} \widetilde{F},$$
  \item For every $c>0$, we have
  $$ \delta_{\Gamma,F} = \limsup\limits_{n \to \infty} \frac{1}{n} \log (\sum\limits_{\gamma \in \Gamma, n-c \leq d(x,\gamma x)\leq n} 
e^{\int_x^{\gamma x} \widetilde{F}}).$$
 \end{enumerate}
\end{prop}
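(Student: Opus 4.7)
My plan is to derive all three parts from the definition of $\delta_{\Gamma,F}$ as the exponential growth rate of the annular partial sums, in direct analogy with the classical theory for the Poincaré series of a Kleinian group. Throughout, I set
\[
A_n = \{\gamma \in \Gamma : n-1 \le d(x,\gamma x) \le n\}, \qquad Z_n = \sum_{\gamma \in A_n} e^{\int_x^{\gamma x}\widetilde{F}},
\]
so that $\delta_{\Gamma,F} = \limsup_n \tfrac{1}{n}\log Z_n$ by definition.

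For (1), I would regroup the Poincaré series by shells as $P_{x,\Gamma,F}(s) = \sum_n \sum_{\gamma \in A_n} e^{\int_x^{\gamma x}\widetilde{F}\,-\,s\,d(x,\gamma x)}$ and use the pointwise bound $e^{-sn} \le e^{-s\,d(x,\gamma x)} \le e^{-s(n-1)}$ on $A_n$ to sandwich $P_{x,\Gamma,F}(s)$ between $\sum_n Z_n e^{-sn}$ and $e^s\sum_n Z_n e^{-sn}$. The Cauchy--Hadamard root test applied to this geometric-type series then yields convergence for $s>\delta_{\Gamma,F}$ and divergence for $s<\delta_{\Gamma,F}$.

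For (2), I would first reduce to the case $x \in \jC\Lambda\Gamma$; this uses the standard basepoint-independence of $\delta_{\Gamma,F}$, itself a consequence of the Hölder-type Gibbs cocycle estimates of Proposition~\ref{prop:major} applied to the $\Gamma$-invariant potential $\widetilde{F}$. Once $x \in \jC\Lambda\Gamma$, the $\Gamma$-invariance of $\jC\Lambda\Gamma$ gives $\gamma x \in \jC\Lambda\Gamma$ for every $\gamma \in \Gamma$, and convexity of $\jC\Lambda\Gamma$ forces the whole geodesic segment $[x,\gamma x]$ to stay inside it. Setting $M = \sup_{\pi^{-1}(\jC\Lambda\Gamma)}\widetilde{F}$ then gives $\int_x^{\gamma x}\widetilde{F} \le M\,d(x,\gamma x) \le Mn$ on $A_n$, hence $Z_n \le e^{nM}\,\#A_n$; since the $F=0$ case of the definition reads $\delta_\Gamma = \limsup_n \tfrac{1}{n}\log \#A_n$, taking $\limsup\tfrac{1}{n}\log$ produces the stated inequality.

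For (3), I set $A_n^{(c)} = \{\gamma : n-c \le d(x,\gamma x) \le n\}$ and $Z_n^{(c)} = \sum_{\gamma \in A_n^{(c)}} e^{\int_x^{\gamma x}\widetilde{F}}$. The upper bound $\limsup_n \tfrac{1}{n}\log Z_n^{(c)} \le \delta_{\Gamma,F}$ follows by covering $A_n^{(c)}$ by at most $\lceil c\rceil+1$ consecutive unit shells $A_m$ with $m\in[n-c,n+1]\cap\mathbb{N}$, giving $Z_n^{(c)} \le (\lceil c\rceil+1)\max_m Z_m$. For the matching lower bound: if $c\ge 1$ then the inclusion $A_n \subset A_n^{(c)}$ yields $Z_n \le Z_n^{(c)}$ directly; if $c<1$, each unit shell $A_n$ partitions into $\lceil 1/c\rceil$ sub-shells of width at most $c$, and an averaging argument along a subsequence $n_j$ realizing $\limsup \tfrac{1}{n}\log Z_n$ selects, for each $j$, a sub-shell with mass at least $Z_{n_j}/\lceil 1/c\rceil$, so the width-$c$ growth rate is at least $\delta_{\Gamma,F}$. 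The only step that is not purely mechanical is the basepoint reduction in (2), which I expect to be the main technical input and which hinges on the cocycle estimates of Proposition~\ref{prop:major}; once this is in hand, every remaining step is a routine manipulation of integer shell sums.
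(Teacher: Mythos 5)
The paper itself offers no proof of this proposition---it is imported wholesale from \cite{PPS} (Lemma 3.3 there)---so there is no in-house argument to compare against. On its own terms, your treatment of (1), of (2), and of the upper bound together with the $c\ge 1$ lower bound in (3) is correct and is the standard route: shell decomposition plus the root test for (1) (with the harmless caveat that the sandwich $e^{-sn}\le e^{-s\,d(x,\gamma x)}\le e^{-s(n-1)}$ reverses when $s<0$, costing only a factor $e^{|s|}$); basepoint-independence via the cocycle estimates of Proposition~\ref{prop:major}, then convexity and $\Gamma$-invariance of $\jC\Lambda\Gamma$ for (2); covering and inclusion of shells for (3) when $c\ge1$.

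The genuine gap is the lower bound in (3) for $0<c<1$. Your pigeonhole step produces, along the subsequence $n_j$, a heavy sub-shell $\{t_j-c\le d(x,\gamma x)\le t_j\}$ whose outer radius $t_j$ is in general \emph{not} an integer, whereas the statement concerns the specific shells $\{n-c\le d(x,\gamma x)\le n\}$ with $n\in\N$. For $c<1$ these integer-indexed shells do not cover $\Gamma$: a priori the orbit distances could concentrate in $\bigcup_n\,(n,\,n+1-c)$, making every integer-indexed thin shell light (even empty) while the limsup over real radii still equals $\delta_{\Gamma,F}$. Ruling out such concentration is exactly the nontrivial content of the $c<1$ case; in \cite{PPS} it is handled by composing with auxiliary group elements to translate a heavy thin shell to one of (almost) prescribed radius, which requires controlling the triangle-inequality defect $d(x,\gamma x)+d(x,\alpha x)-d(x,\gamma\alpha x)$ for a definite proportion of the $\gamma$'s---an input absent from your argument. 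As written, your proof only establishes the statement with the limsup taken over real radii. This matters for the present paper: the proposition is invoked in Section~5 with $c=\ell(h)$, the translation length of a hyperbolic element, which may well be smaller than $1$.
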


We define a set of measures on $\partial_{\infty} \widetilde{M}$ as the limit points when $s \to \delta_{\Gamma,F}^+$ of
$$ \frac{1}{P_{x,\Gamma,F}(s)} \sum\limits_{\gamma \in \Gamma} e^{\int_x^{\gamma x} \widetilde{F}-s} h(d(x,\gamma x)) D_{\gamma x} = 
\mu^F_{x,s}.$$
where $h: \R_+ \to \R_+^*$ is a well chosen non-decreasing map and $D_{\gamma x}$ is the Dirac measure supported on $\gamma x$.
      
\begin{prop} 
 If $\delta_{\Gamma,F}< \infty$ then
 \begin{enumerate}
  \item $\{ \mu^F_{x,s} \}$ has at least one limit point when $s \to \delta_{\Gamma,F}^+$ with support $\Lambda \Gamma$,
  \item If $\mu^F_x$ is a limit point then it is a Patterson density \textit{i.e}\\
  $\forall \gamma \in \Gamma, x,y \in \widetilde{M}, \xi \in \partial_{\infty} \widetilde{M}$\\
  $$ \gamma_* \mu^F_x = \mu^F_{\gamma x}, $$
  $$ d\mu^F_x (\xi) = e^{-C_{F-\delta_{\Gamma,F}, \xi}(x,y)} d\mu^F_y (\xi). $$
 \end{enumerate}
\end{prop}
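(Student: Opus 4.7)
The plan is to follow Patterson's classical construction, adapted to the Gibbs setting of \cite{PPS}. First observe that each $\mu^F_{x,s}$ is, after dividing by $P_{x,\Gamma,F}(s)$, a probability measure on the compactification $\overline{\widetilde{M}}=\widetilde{M}\cup\partial_{\infty}\widetilde{M}$. Since the compactification is compact, the family $(\mu^F_{x,s})_{s>\delta_{\Gamma,F}}$ admits at least one weak-$*$ accumulation point $\mu^F_x$ as $s\to\delta_{\Gamma,F}^+$. Part~(1) then amounts to showing that no mass is lost inside $\widetilde{M}$ and that the limit is carried by $\Lambda\Gamma$. Here one distinguishes two cases using Patterson's dichotomy. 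If $(\Gamma,F)$ is of divergence type one may take $h$ bounded; the $\mu^F_{x,s}$-mass of any compact $K\subset\widetilde{M}$ is then controlled by a finite sum, whereas the denominator $P_{x,\Gamma,F}(s)$ blows up as $s\to\delta_{\Gamma,F}^+$, forcing the mass to escape to $\partial_{\infty}\widetilde{M}$. Because the support of $\mu^F_{x,s}$ lies in $\overline{\Gamma x}$, the limit is supported in $\overline{\Gamma x}\cap\partial_{\infty}\widetilde{M}=\Lambda\Gamma$. If the series converges at $\delta_{\Gamma,F}$, one invokes Patterson's trick: there exists a slowly varying, non-decreasing $h:\R_+\to\R_+^*$ such that the modified Poincaré series converges for $s>\delta_{\Gamma,F}$, diverges at $s=\delta_{\Gamma,F}$, and satisfies $h(r+c)/h(r)\to 1$ as $r\to\infty$ for every fixed $c$, so the same argument applies.

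For part~(2), equivariance follows from reindexing. Substituting $\gamma'\mapsto\gamma\gamma'$ in the defining sum, using $\Gamma$-invariance of $\widetilde{F}$ and the fact that $\gamma$ is an isometry, one gets
\begin{equation*}
\gamma_*\mu^F_{x,s}=\frac{1}{P_{x,\Gamma,F}(s)}\sum_{\gamma''\in\Gamma}e^{\int_{\gamma x}^{\gamma'' x}\widetilde{F}-sd(\gamma x,\gamma'' x)}h(d(\gamma x,\gamma'' x))D_{\gamma'' x},
\end{equation*}
which coincides with $\mu^F_{\gamma x,s}$ up to the ratio $P_{\gamma x,\Gamma,F}(s)/P_{x,\Gamma,F}(s)$. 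A standard comparison using the triangle inequality, Proposition \ref{prop:major}, and the slow variation of $h$ shows that this ratio tends to $1$ when $s\to\delta_{\Gamma,F}^+$, yielding the first Patterson identity in the weak-$*$ limit.

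The Radon--Nikodym formula is the heart of the statement. For fixed $x,y\in\widetilde{M}$, one rewrites the weight at each atom $\gamma x$ of $\mu^F_{x,s}$ as
\begin{equation*}
e^{\int_x^{\gamma x}\widetilde{F}-sd(x,\gamma x)}h(d(x,\gamma x))=e^{\int_y^{\gamma x}\widetilde{F}-sd(y,\gamma x)}h(d(y,\gamma x))\cdot R_s(\gamma,x,y),
\end{equation*}
where the remainder $R_s(\gamma,x,y)$ collects both the difference of potential integrals and the distance-exponent differences. Whenever $\gamma_n x\to\xi\in\Lambda\Gamma$, the exponent in $R_s$ converges to $C_{F,\xi}(x,y)-\delta_{\Gamma,F}\,\beta_\xi(x,y)=-C_{F-\delta_{\Gamma,F},\xi}(x,y)$ by the very definition of the Gibbs cocycle, while $h(d(x,\gamma_n x))/h(d(y,\gamma_n x))\to 1$ by slow variation. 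Passing to the weak-$*$ limit then gives the desired relation $d\mu^F_x(\xi)=e^{-C_{F-\delta_{\Gamma,F},\xi}(x,y)}d\mu^F_y(\xi)$.

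The main obstacle will be this last step: turning the pointwise expansion of the atomic weights into a genuine statement about Radon--Nikodym derivatives of weak-$*$ limit measures. This requires localizing near a given boundary point $\xi$ by testing against continuous functions supported in shadows $\O_x B(y',r)$ and using part~(2) of Proposition \ref{prop:major} to control the cocycle uniformly on such shadows, all while keeping the slow-variation estimates on $h$ uniform in $s$ as $s\to\delta_{\Gamma,F}^+$.
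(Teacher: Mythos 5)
The paper does not prove this proposition: it is recalled verbatim from \cite{PPS} in the preliminaries, so there is no in-paper argument to compare against. Your outline is the standard Patterson--Sullivan construction as carried out in \cite{PPS} (compactness of $\widetilde{M}\cup\partial_\infty\widetilde{M}$, Patterson's auxiliary function $h$ to force divergence of the modified series, reindexing for equivariance, and the Gibbs cocycle for the Radon--Nikodym relation), and the overall plan is sound. Two remarks on the easy parts: the equivariance is in fact \emph{exact} before any limit is taken, since the change of variables $\gamma'\mapsto\gamma\gamma'\gamma^{-1}$ shows $P_{\gamma x,\Gamma,F}(s)=P_{x,\Gamma,F}(s)$ identically, so no asymptotic argument about that ratio is needed; on the other hand, for the Radon--Nikodym identity between $\mu^F_x$ and $\mu^F_y$ with $y\notin\Gamma x$ you must extract the two limits along a \emph{common} subsequence $s_n\to\delta_{\Gamma,F}^+$ and use a single normalization (in \cite{PPS} one divides by the modified series at a fixed origin), otherwise an uncontrolled constant $\lim P_{y}(s_n)/P_x(s_n)$ contaminates the formula. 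You should state this explicitly.

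There are two genuine gaps. First, part (1) asserts that the support of the limit equals $\Lambda\Gamma$, whereas your argument only yields inclusion in $\overline{\Gamma x}\cap\partial_\infty\widetilde{M}=\Lambda\Gamma$. The standard completion is to observe that, by the quasi-invariance proved in part (2), the support of $\mu^F_x$ is a nonempty closed $\Gamma$-invariant subset of $\Lambda\Gamma$, and since $\Gamma$ is non-elementary $\Lambda\Gamma$ is the unique minimal such set; this step is missing. Second, the Radon--Nikodym identity is the substantive content of the proposition, and you have only described a plan for it (you say so yourself). The pointwise convergence of the atomic weight ratios does not by itself transfer to the weak-$*$ limits, because the limit measures are non-atomic and see none of the individual atoms; one must test against continuous functions supported in shadows $\O_xB(y',r)$, use part (2) of Proposition \ref{prop:major} to get bounds on $C_{F-s,\xi}(x,y)$ that are uniform over all $\gamma x$ in the shadow and over $s$ near $\delta_{\Gamma,F}$, and then let $r\to 0$. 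Until that estimate is written out, with the slow-variation bound on $h$ made uniform as well, the proof of part (2) is incomplete.
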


Using the Hopf parametrization on $T^1 \widetilde{M}$, each unit tangent vector $v$ can be written as $v= (v_+,v_-,t) \in
\partial_{\infty} \widetilde{M} \times \partial_{\infty} \widetilde{M} \times \R$. We define a measure on 
$T^1 \widetilde{M}$ by
$$ d \widetilde{m}_{\widetilde{F}} (v) = \frac{d \widetilde{\mu}_x^{\widetilde{F} \circ \iota} (v_-) 
d \widetilde{\mu}_x^{\widetilde{F}} (v) dt}{D_{F,x} (v_+,v_-) } $$

where 
$$D_{F,x} (v_+,v_-) = e^{- \frac{1}{2} (C_{F,v_-} (x, \pi (v)) + (C_{F\circ \iota,v_+} (x,\pi(v)))} $$ 
is the potential gap and 
$$ \iota \left\{
      \begin{aligned}
	T^1 \widetilde{M} &\to &T^1 \widetilde{M}\\
	v &\mapsto& -v
	\end{aligned}
\right.$$
is the antipodal map.\\
This measure is called the Gibbs measure associated to $(\Gamma,F)$.\\
It is a measure independent of $x$, invariant under the action of $\Gamma$ and invariant by the geodesic flow. Hence it
defines a measure $m^F$ on $T^1M$ invariant by the geodesic flow.\\

Let $m \in \M^1 (T^1 M)$ be a measure with finite entropy $h_{m}(\phi_t)$. We define the \textbf{metric pressure} of
a potential $F$ with respect to the measure $m$ as the quantity
$$ P_{\Gamma,F}(m) = h_{m}(\phi_t) +\int_{T^1 M} F dm .$$
We say that the supremum
$$ P(\Gamma,F) = \sup\limits_{m \in \M(T^1M)} P_{\Gamma,F}(m) $$
is the \textbf{topological pressure} of the potential $F$. An element realizing this upper bound is called an \textbf{equilibrium state} for 
$(\Gamma,F)$.

\begin{thm}\cite{otal2004, PPS}\label{thm:varp}
 Let $\widetilde{M}$ be a complete, simply connected Riemannian manifold with pinched negative curvature, $\Gamma$ a non-elementary discrete group of isometries of $\widetilde{M}$
 and $\widetilde{F}: T^1 \widetilde{M} \to \R$ a Hölder-continuous $\Gamma$-invariant map with $\delta_{\Gamma,F} < \infty$.
 \begin{enumerate}
  \item We have
  $$ P(\Gamma,F) = \delta_{\Gamma,F}.$$
  \item If there exists a finite Gibbs measure $m_F$ for $(\Gamma,F)$ such that the negative part of $F$ is $m_F -$integrable, then 
  $m^F = \frac{m_F}{\norm{m_F}}$ is the unique equilibrium state for $(\Gamma,F)$. Otherwise, there exists no equilibrium state for 
  $(\Gamma,F)$.
 \end{enumerate}
\end{thm}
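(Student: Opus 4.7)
The plan is to prove the two parts separately: first establish $P(\Gamma,F)=\delta_{\Gamma,F}$, then show the finite Gibbs measure (when it exists) is the unique equilibrium state. Throughout, I would exploit the Patterson--Sullivan construction summarized in the previous paragraphs, using the Gibbs measure $m^F$ on $T^1M$ as the candidate equilibrium state.

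For the upper bound $P(\Gamma,F)\le\delta_{\Gamma,F}$, I would fix an invariant probability $m$ with finite entropy and $\int F\,dm$ defined, and use a Bowen-ball counting argument. Writing the entropy $h_m(\phi_1)$ via the Brin--Katok local entropy formula, one needs an upper bound on the number of $(T,\varepsilon)$-separated orbit pieces weighted by $e^{\int_0^T F\circ\phi_s\,ds}$. On the convex core, any such separated piece can be shadowed by a $\gamma$-translate of the basepoint with $d(x,\gamma x)\approx T$, so Proposition \ref{prop:major} converts the weighted count into $\sum_{n-c\le d(x,\gamma x)\le n}e^{\int_x^{\gamma x}\widetilde F}$, which by Proposition \ref{prop:exp}(3) grows no faster than $e^{T\delta_{\Gamma,F}}$. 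Combining with $\int F\,dm$ via the ergodic theorem yields the bound.

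For the lower bound, I would produce measures realizing $\delta_{\Gamma,F}$. If a finite Gibbs measure $m^F$ exists with $F^-$ integrable, I would compute $h_{m^F}(\phi_1)+\int F\,dm^F=\delta_{\Gamma,F}$ directly: using the Hopf product structure $d\widetilde m_{\widetilde F}=D_{F,x}^{-1}\,d\widetilde\mu_x^{\widetilde F\circ\iota}\,d\widetilde\mu_x^{\widetilde F}\,dt$ and the quasi-invariance $d\mu^F_x(\xi)=e^{-C_{F-\delta_{\Gamma,F},\xi}(x,y)}d\mu^F_y(\xi)$, the entropy of the conditionals on strong unstable leaves is computable via the Ledrappier--Young formula and gives $\delta_{\Gamma,F}-\int F\,dm^F$. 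If no finite Gibbs measure exists, I would approximate from below by exhausting $\Gamma$ by convex-cocompact subgroups $\Gamma_n\subset\Gamma$, for each of which the corresponding Gibbs measure is automatically finite, and check $\sup_n\delta_{\Gamma_n,F}=\delta_{\Gamma,F}$.

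For uniqueness in (2), suppose $m'$ is another equilibrium state. Then equality in the Ledrappier--Young decomposition forces the conditional measures of $m'$ on strong unstable leaves to equal, up to normalization, the densities prescribed by the Patterson--Sullivan conformal density $\mu^F$; the same holds on stable leaves using $F\circ\iota$. Together with invariance under the flow, this determines $m'$ in Hopf coordinates and gives $m'=m^F/\|m^F\|$. I expect the main obstacle to be the non-compactness of $T^1M$: Brin--Katok counting, the entropy disintegration, and control of escape of mass into the cusps are all delicate, and the hypothesis that $F^-$ is $m_F$-integrable is precisely what is needed to rule out loss of entropy at infinity and make the variational argument tight.
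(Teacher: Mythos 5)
This theorem is quoted in the paper from \cite{otal2004} and \cite{PPS} without proof, so your attempt can only be measured against the published arguments. Your overall architecture --- upper bound by a weighted orbit-counting/Brin--Katok argument, lower bound by exhibiting the normalized Gibbs measure as an equilibrium state when it is finite and by approximating $\delta_{\Gamma,F}$ by subgroups otherwise, uniqueness via rigidity of the conditional measures on unstable leaves --- does match the strategy of Otal--Peign\'e and Paulin--Pollicott--Schapira in outline. But two of your steps hide the actual content of the theorem. First, ``exhausting $\Gamma$ by convex-cocompact subgroups $\Gamma_n$ and checking $\sup_n\delta_{\Gamma_n,F}=\delta_{\Gamma,F}$'' is not a check: an increasing union of convex-cocompact subgroups cannot exhaust a group containing parabolics, and what is actually needed is the Bishop--Jones-type construction \cite{BJ} producing, for each $\varepsilon>0$, a free Schottky subsemigroup whose limit set is conical and whose critical exponent twisted by $F$ exceeds $\delta_{\Gamma,F}-\varepsilon$, so that its Gibbs measure is finite and realizes pressure at least $\delta_{\Gamma,F}-\varepsilon$. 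That construction is the heart of the lower bound and of the ``otherwise there is no equilibrium state'' clause, and your proposal supplies no argument for it.

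Second, the upper bound and the entropy computation are asserted through tools that are not available off the shelf in this setting. The claim that every $(T,\varepsilon)$-separated orbit piece ``can be shadowed by a $\gamma$-translate of the basepoint'' only holds for orbit segments that return near a fixed compact set; measures charging the non-recurrent or cuspidal part must be handled separately (they contribute no entropy, but proving this is precisely where $\delta_{\Gamma,F}<\infty$ and the $m_F$-integrability of $F^-$ enter, as you half-acknowledge in your last sentence). Likewise, the Ledrappier--Young formula you invoke both to compute $h_{m^F}(\phi_t)=\delta_{\Gamma,F}-\int F\,dm^F$ and to force uniqueness is not established for infinite-volume, variable-curvature, non-compact quotients; the published proofs instead use the Gibbs property of $m^F$ on dynamical balls together with measurable partitions subordinate to strong unstable leaves. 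As written, your proposal is a correct table of contents for the proof, but each of its paragraphs stops where the real work begins.
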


\section{Mixing property for the geodesic flow}
The question of the topological mixing of the geodesic flow on a negatively curved manifold is still open in full generality. This question is closely related to
mixing with respect to a Gibbs measure (see \cite{PPS}).\\
We define the length of an element $\gamma \in \Gamma$ by $\ell (\gamma) = \inf\limits_{z \in M} d(z, \gamma z)$.
\begin{thm}
 If $\delta_{\Gamma,F} < \infty$ and $m^F$ is finite then the following propositions are equivalent.
 \begin{enumerate}
  \item The geodesic flow is topologically mixing on $\Omega $,
  \item The geodesic flow is mixing with respect to $m^F$,
  \item $L(\Gamma) =  \{ \ell(\gamma); \gamma \in \Gamma \}$ is not contained in a discrete subgroup of $\R$.
 \end{enumerate}
\end{thm}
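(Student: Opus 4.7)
The plan is to prove the three implications $(2)\Rightarrow(1)$, $(1)\Rightarrow(3)$, and $(3)\Rightarrow(2)$, the last being the deep one.

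For $(2)\Rightarrow(1)$, I would simply note that the Gibbs measure $m^F$ has full support on $\Omega$: indeed, this follows from the fact that the support of $m^F$ is $\phi_t$-invariant, closed, and contains the projection of lifts of geodesics with both endpoints in $\Lambda\Gamma$, which is dense in $\Omega$ by the definition of the non-wandering set. Hence any two non-empty open sets $\U,\jV\subset\Omega$ satisfy $m^F(\U),m^F(\jV)>0$, and mixing with respect to $m^F$ gives $m^F(\phi_{-t}\U\cap\jV)\to m^F(\U)m^F(\jV)>0$, so the intersection is non-empty for $|t|$ large.

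For $(1)\Rightarrow(3)$, I would argue by contrapositive. Suppose $L(\Gamma)\subset c\Z$ for some $c>0$. Using the local product structure of $\Omega$ and the closing lemma (available in pinched negative curvature), one shows that the Gibbs cocycle construction yields a continuous $\phi_t$-equivariant map $\Psi:\Omega\to\R/c\Z$ intertwining $\phi_t$ with the translation $s\mapsto s+t$ on $\R/c\Z$. Concretely, one fixes a base vector $v_0\in\Omega$, defines $\Psi$ on the orbit of $v_0$ by integration of a length cocycle, and extends using the fact that return times to any small transversal lie in a discrete set because $L(\Gamma)$ does. Since circle rotations are not topologically mixing, the existence of such a factor contradicts $(1)$.

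The hard implication is $(3)\Rightarrow(2)$, which is the Babillot-type mixing argument adapted to pinched negative curvature and Gibbs potentials (the case $F=0$ is treated in Babillot's paper and the general Hölder case in \cite{PPS}). The strategy I would follow is spectral. Assuming $m^F$ is not mixing, since the flow is already ergodic (indeed Gibbs measures of finite mass are ergodic under $(3)$ for $F=0$, and the same proof extends; alternatively the Hopf argument via conformal densities gives ergodicity directly), the non-mixing forces the existence of a non-trivial $L^2(m^F)$-eigenfunction: a measurable $\psi:T^1M\to\C$ with $\psi\circ\phi_t=e^{2\pi i\alpha t}\psi$ for some $\alpha\neq 0$. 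Using the local product structure of $m^F$ (disintegration along stable/unstable horospheres with densities given by the Patterson densities $\mu^F_x$ twisted by the Gibbs cocycle), one shows that $\psi$ must be almost everywhere constant along strong stable and strong unstable leaves. Then the transitivity of holonomy along closed geodesics forces $e^{2\pi i \alpha \ell(\gamma)}=1$ for every $\gamma\in\Gamma$, i.e.\ $\alpha L(\Gamma)\subset\Z$. This contradicts $(3)$.

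The main obstacle is the horospheric invariance step in $(3)\Rightarrow(2)$: rigorously establishing that an $L^2$-eigenfunction for the geodesic flow is invariant under the stable/unstable foliations requires the product structure of the Gibbs measure, the absolute continuity of the conditional measures with respect to the Patterson densities (via the potential gap $D_{F,x}$), and a Lebesgue-density / Fubini argument along small local product boxes, together with the use of the variational principle of Theorem~\ref{thm:varp} to exclude pathological behavior on the meager set outside the full-measure set on which the disintegration is clean. Once this invariance is in hand, the passage to a length-spectrum condition via closing-lemma orbits is classical.
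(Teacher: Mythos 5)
The paper does not prove this theorem at all: it is quoted as a known result (the case $F=0$ is Babillot's mixing theorem, the general H\"older case is in the reference [PPS15]), so your attempt has to be judged on its own merits. Your implications $(2)\Rightarrow(1)$ and $(1)\Rightarrow(3)$ are essentially the standard arguments and are fine in outline: the Gibbs measure has full support in $\Omega$ because the Patterson densities are supported on all of $\Lambda\Gamma$, and the arithmeticity of the length spectrum does produce a continuous circle factor (though the cocycle one integrates there is the Busemann/cross-ratio cocycle, not the Gibbs cocycle of $F$, which plays no role in that step).

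The genuine gap is in $(3)\Rightarrow(2)$, at the very first move: you assert that ``the non-mixing forces the existence of a non-trivial $L^2(m^F)$-eigenfunction $\psi$ with $\psi\circ\phi_t=e^{2\pi i\alpha t}\psi$.'' That is false. Absence of nonconstant eigenfunctions is equivalent to \emph{weak} mixing, and there are plenty of ergodic, weakly mixing, non-mixing flows; so an ergodic system can fail to be mixing while admitting no eigenfunction whatsoever. Your spectral argument therefore only rules out eigenvalues, i.e.\ it proves at best $(3)\Rightarrow$ weak-mixing, which is strictly weaker than the claimed conclusion $(2)$ and, in the context of this paper, would not even suffice for its application (the author needs genuinely mixing measures to approximate Dirac orbital measures). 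The correct argument, due to Babillot, starts instead from the hypothesis that some correlation $\int f\circ\phi_{t_n}\cdot f\,dm^F$ does not tend to $\bigl(\int f\,dm^F\bigr)^2$ along a sequence $t_n\to\infty$, and uses a positive-definiteness/spectral-measure lemma to extract a subsequence along which $f\circ\phi_{t_n}$ converges weakly to a limit having nonzero correlation with $f$ (in fact so that $f$ is itself a weak limit of its own translates); only then does the local product structure of $m^F$ and the Hopf-type argument force invariance along the strong stable and unstable foliations and lead to the arithmeticity of the (cross-ratio) length spectrum, contradicting $(3)$. Your horosphere-invariance step is the right second half of that proof, but it must be applied to these weak limits, not to a hypothetical eigenfunction; as written, the implication $(3)\Rightarrow(2)$ is not established.
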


Here are some cases where the geodesic flow is known to be topologically mixing \cite{MR1617430},\cite{MR1703039},\cite{MR1779902}.

\begin{lem}\label{lem:mix}
 Let $\Gamma$ be a non elementary group of isometries of a Hadamard manifold $\widetilde{M}$ with pinched negative curvature.
 If $M = \widetilde{M} / \Gamma$ satisfies one of the following properties then the restriction of the geodesic flow to its non-wandering set is topologically mixing.
 \begin{enumerate}
  \item The curvature of $M$ is constant,
  \item $\dim M = 2$,
  \item There exists a parabolic isometry in $\Gamma$,
  \item $\Omega = T^1 M$.
 \end{enumerate}
\end{lem}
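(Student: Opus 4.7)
The overall strategy is to reduce each of the four hypotheses to the non-arithmeticity of the length spectrum $L(\Gamma)$ and then invoke the equivalence $(1)\Leftrightarrow(3)$ of the preceding theorem. I would first record the slight subtlety that that equivalence was stated under the assumption $\delta_{\Gamma,F}<\infty$ and $m^F$ finite: for our purposes it is enough to note that $(1)\Leftrightarrow(3)$ actually holds at the purely topological level for any non-elementary $\Gamma$ (this is the core dynamical statement in \cite{PPS}, extending the surface case of Dal'bo), so no hypothesis on Gibbs measures is needed. So the whole lemma reduces to four independent verifications that $L(\Gamma)$ is not contained in any subgroup $c\Z\subset\R$.

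For case (3), let $p\in\Gamma$ be parabolic fixing some $\xi_0\in\partial_\infty\widetilde M$, and pick a hyperbolic $\gamma\in\Gamma$ not fixing $\xi_0$ (such a $\gamma$ exists because $\Gamma$ is non-elementary). I would use the standard horospherical estimate $d(o,p^n o)=2\log n+O(1)$ in pinched negative curvature, together with the comparison $\ell(\gamma p^n)=d(o,\gamma p^n o)+O(1)$ coming from the fact that the axes of $\gamma p^n$ pass near $o$, to get an asymptotic expansion $\ell(\gamma p^n)=2\log n+c+o(1)$ for some constant $c$ depending on $\gamma$. The increments $\ell(\gamma p^{n+1})-\ell(\gamma p^n)\to 0$ then prevent $L(\Gamma)$ from lying in $c\Z$, giving non-arithmeticity. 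For case (4), when $\Omega=T^1M$ the geodesic flow is uniformly hyperbolic on the whole of $T^1M$ and contains a dense set of periodic orbits whose lengths cannot all lie in a discrete subgroup (this is a standard consequence of the local product structure plus the specification/closing lemma, and a direct proof of topological mixing for this case goes back to Anosov–Sinai in the Anosov setting).

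Cases (1) and (2) I would dispatch by citation: for constant curvature \cite{MR1779902} (and Dal'bo's original arguments in \cite{MR1617430}) shows that any non-elementary discrete subgroup of $\mathrm{Isom}(\H^n)$ has non-arithmetic length spectrum, using the algebraic structure of such groups to produce two elements whose translation lengths are rationally independent; in dimension two, Dal'bo's theorem in \cite{MR1703039} likewise gives non-arithmeticity for any non-elementary Fuchsian-type $\Gamma$ acting on a Hadamard surface with pinched curvature. In both situations, combining with the equivalence $(1)\Leftrightarrow(3)$ of the preceding theorem closes the argument.

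The main obstacle I expect is case (3): the horoball asymptotics for $d(o,p^n o)$ and the comparison between $d(o,\gamma p^n o)$ and $\ell(\gamma p^n)$ need to be done carefully in variable pinched curvature, where one does not have the clean hyperbolic trigonometry of constant curvature and must instead use the convexity of horoballs and the exponential divergence of geodesics. The other three cases are essentially invocations of existing results once the reduction to $L(\Gamma)$ non-arithmetic is in place.
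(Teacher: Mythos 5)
The paper itself offers no proof of this lemma: it is stated as a compilation of known results, with pointers to \cite{MR1617430}, \cite{MR1703039} and \cite{MR1779902}. Your general framework --- reduce every case to the non-arithmeticity of $L(\Gamma)$ and invoke the purely topological equivalence ``mixing on $\Omega$ $\Leftrightarrow$ $L(\Gamma)$ not contained in a discrete subgroup of $\R$'', which is due to Dal'bo and indeed needs no finiteness hypothesis on Gibbs measures --- is exactly how those references proceed, and for cases (1) and (2) you, like the paper, ultimately defer to the literature, which is fine.

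The two cases you argue in detail, however, both contain genuine gaps. For case (3): the estimate $d(o,p^no)=2\log n+O(1)$ is a constant-curvature formula; with pinching $-b^2\le\kappa\le-a^2$ the comparison between horospherical and Riemannian distance only gives bounds of the form $\frac{2}{b}\log(\cdot)-C\le d\le\frac{2}{a}\log(\cdot)+C$, so no such expansion is available. More seriously, even granting it, your two inputs are only $O(1)$-estimates, and $O(1)$ errors cannot yield the expansion $\ell(\gamma p^n)=f(n)+c+o(1)$ that the ``increments tend to $0$'' argument requires: bounded errors are perfectly compatible with all values sitting in $c\Z$. The correct route (this is the content of \cite{MR1617430}) obtains genuine $o(1)$ control from Busemann convergence: since $p$ is parabolic fixing $\xi$ one has $\beta_\xi(o,po)=0$, hence $d(o,p^{n+1}o)-d(o,p^no)=d(p^{-1}o,p^no)-d(o,p^no)\to\beta_\xi(p^{-1}o,o)=0$, and one must also show that $\ell(\gamma p^n)-d(o,\gamma p^n o)$ \emph{converges} (not merely stays bounded). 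For case (4), the appeal to uniform hyperbolicity, specification and the Anosov--Sinai argument does not work here: $T^1M$ is non-compact, and in any case density of periodic orbits says nothing by itself about whether their lengths all lie in $c\Z$. The known proof for $\Omega=T^1M$ instead uses that $\Lambda\Gamma=\partial_\infty\widetilde M$ is connected, so that a continuous cross-ratio --- expressible as a limit of differences of translation lengths, hence forced into $c\Z$ on a dense set of quadruples of fixed points --- would have to be constant, which gives the contradiction. So the skeleton of your proof is right, but the self-contained portions would need to be replaced by these arguments (or by the citations) to be correct.
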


To conclude this section, let us recall the Hopf-Tsuji-Sullivan criterion for the ergodicity of the geodesic flow with
respect to the Gibbs measure (see \cite{PPS} for a proof)

\begin{thm}\label{thm:HTS}
 The following assertions are equivalent:
 \begin{enumerate}
  \item $(\Gamma,F)$ is of divergence type,
  \item $\forall x\in \widetilde{M}, \mu_x^{F} (\partial_{\infty} \widetilde{M} \backslash \Lambda_c \Gamma) = 0$,
  \item The dynamical system $(T^1M, (\phi_t)_{t \in \R},m_F)$ is ergodic.
 \end{enumerate}
\end{thm}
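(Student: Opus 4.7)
The plan is to establish $(1)\Leftrightarrow(2)\Leftrightarrow(3)$, the engine of the proof being a Shadow Lemma derived from the cocycle estimates of Proposition~\ref{prop:major}. That lemma states that for $r$ large enough there is a constant $C=C(r)>0$ such that, uniformly in $\gamma\in\Gamma$,
$$ \frac{1}{C}\;e^{\int_x^{\gamma x}\widetilde{F}-\delta_{\Gamma,F}\,d(x,\gamma x)} \;\leq\; \mu_x^F\bigl(\O_x B(\gamma x,r)\bigr) \;\leq\; C\;e^{\int_x^{\gamma x}\widetilde{F}-\delta_{\Gamma,F}\,d(x,\gamma x)}. $$
This identifies the $\mu_x^F$-mass of a Patterson--Sullivan shadow with the general term of the Poincaré series $P_{x,\Gamma,F}$ at its critical exponent; it is the bridge between (1) and (2).

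For $(1)\Rightarrow(2)$, observe that by definition $\xi\in\Lambda_c\Gamma$ iff $\xi$ belongs to $\O_x B(\gamma x,r)$ for infinitely many $\gamma$ once $r$ is chosen large enough. To bound $\mu_x^F(\Lambda\Gamma\setminus\Lambda_c\Gamma)$, I would partition $\Gamma$ into shells $n-1\leq d(x,\gamma x)<n$ and use the quasi-invariance $\gamma_\ast\mu_x^F=\mu_{\gamma x}^F$ together with the Shadow Lemma to run a direct mass-transport (Borel--Cantelli style) argument: divergence of $\sum_\gamma e^{\int_x^{\gamma x}\widetilde{F}-\delta_{\Gamma,F}d(x,\gamma x)}$ forces the $\limsup$ of shadows at a fixed scale $r$ to carry full measure. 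Conversely, for $(2)\Rightarrow(1)$, the balls $B(\gamma x,r)$ for $d(x,\gamma x)\geq N$ produce a shadow-covering of $\Lambda_c\Gamma$; the upper bound of the Shadow Lemma then yields
$$ \mu_x^F(\Lambda_c\Gamma) \;\leq\; C\sum_{d(x,\gamma x)\geq N} e^{\int_x^{\gamma x}\widetilde{F}-\delta_{\Gamma,F}\,d(x,\gamma x)}, $$
which cannot remain uniformly positive in $N$ unless the series diverges.

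The equivalence $(2)\Leftrightarrow(3)$ follows the Hopf strategy adapted to Gibbs measures. In the Hopf parametrization, $\widetilde{m}_F$ is, up to the gap factor $D_{F,x}^{-1}$, a product of $\widetilde{\mu}_x^{\widetilde{F}\circ\iota}(dv_-)$, $\widetilde{\mu}_x^{\widetilde{F}}(dv_+)$ and $dt$, so the strong stable and unstable foliations admit absolutely continuous transverse measures. Given $f\in C_c(T^1M)$, its Birkhoff averages under $\phi_t$ are constant along strong (un)stable leaves, hence descend to a function on $\partial_\infty\widetilde{M}\times\partial_\infty\widetilde{M}\setminus\Delta$ which is separately measurable in each coordinate. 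Full $\mu_x^F$-measure of $\Lambda_c\Gamma$ is precisely what turns the $\Gamma$-quasi-invariance of $\mu_x^F$ into essential invariance of this separated function under the diagonal $\Gamma$-action; a standard two-variable $0$--$1$ argument then forces it to be constant, proving ergodicity. For the converse, a positive $\mu_x^F$-measure non-conical part produces a nontrivial $\phi_t$-invariant function in $L^\infty(T^1M,m_F)$ by saturation along orbits, contradicting ergodicity.

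The hard part will be $(2)\Rightarrow(3)$: running Hopf's argument in this non-compact, non-uniformly hyperbolic setting requires one to show that the strong (un)stable holonomies are absolutely continuous with respect to the conditionals of $m_F$. This reduces to the Patterson transformation rule $d\mu_x^F(\xi)=e^{-C_{F-\delta_{\Gamma,F},\xi}(x,y)}d\mu_y^F(\xi)$ and to uniform control of the cocycle $C_{F-\delta_{\Gamma,F}}$ on compact sets, exactly the estimates furnished by Proposition~\ref{prop:major}. Handling those cocycle computations carefully, together with the fact that Birkhoff averages are only defined $m_F$-almost everywhere, is where the technicalities concentrate.
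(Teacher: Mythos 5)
The paper does not actually prove this theorem: it is quoted with a pointer to \cite{PPS}, so your proposal has to be measured against the proof there. Your overall architecture --- a Gibbs--measure shadow lemma identifying $\mu_x^F\bigl(\O_xB(\gamma x,r)\bigr)$ with the terms of the Poincar\'e series, the easy Borel--Cantelli direction for ``convergence $\Rightarrow\mu_x^F(\Lambda_c\Gamma)=0$'' (your $(2)\Rightarrow(1)$ in contrapositive form), and a Hopf argument for ergodicity --- is exactly the standard one and matches the cited source. Those parts are fine as sketches, modulo the minor point that a conical point lies in infinitely many shadows only at a scale $r(\xi)$ depending on $\xi$, so one must take a countable union over $r\in\N$.

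The genuine gap is in $(1)\Rightarrow(2)$. A ``direct Borel--Cantelli style'' argument cannot show that the limsup of the shadows carries full (or even positive) measure: the divergence half of Borel--Cantelli requires independence, which the shadows $\O_xB(\gamma x,r)$ do not have, and even the quasi-independence refinement (Kochen--Stone) only yields \emph{positive} measure of the limsup, after one proves the genuinely nontrivial estimate $\mu_x^F(\O_\gamma\cap\O_{\gamma'})\leq C\,\mu_x^F(\O_\gamma)\,\mu_x^F(\O_{\gamma'})$; upgrading positive to full measure then needs a separate zero--one law. The proof in \cite{PPS} (following Roblin) avoids this by routing through conservativity: divergence of $P_{x,\Gamma,F}$ at $\delta_{\Gamma,F}$ first excludes a wandering set of positive measure for the $\Gamma$-action on $\partial_\infty\widetilde{M}\times\partial_\infty\widetilde{M}\setminus\Delta$ (this is where the mass-transport computation with the shadow lemma actually lives), conservativity of the flow then says that $m_F$-almost every orbit returns infinitely often to a fixed compact set, which is precisely conicality of the forward endpoint, and the Hopf argument upgrades conservativity to ergodicity. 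In particular the logical order is divergence $\Rightarrow$ conservativity $\Rightarrow$ (Hopf) ergodicity $\Rightarrow$ full conical measure, so assertion $(2)$ comes out of the Hopf argument rather than feeding into it as in your plan; note also that conservativity is an unavoidable input for running Hopf's ratio-ergodic-theorem step on a non-compact space with a possibly infinite measure. Supply the conservativity step (or the quasi-independence estimate together with a zero--one law) and your sketch becomes the standard proof.
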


As a consequence of this theorem, one can show that if $\delta_{\Gamma,F} < \infty$, the Patterson density $(\mu_x^{F})_{x \in \widetilde{M}}$ 
associated with $(\Gamma,F)$ is non-atomic (see \cite{PPS} Proposition 5.13).

\section{A finiteness criterion}
 
 First, let us give a criterion for the finiteness of the Gibbs measure. This result comes from \cite{MR1776078} for a potential $F=0$. For 
 the general case where $F$ is an Hölder potential, the proof is given in \cite{PPS}.

 \begin{thm}
  Suppose that $\Gamma$ is a geometrically finite group with $(\Gamma,F)$ of divergence type and $\delta_{\Gamma,F}< \infty$.
  The Gibbs measure $m_{F}$ is finite if and only if for every parabolic fixed point $\xi_p$
 \begin{center}
   $\sum\limits_{\gamma \in \Gamma_{\xi_p}} d(x,\gamma x) e^{\int_x^{\gamma x} (\widetilde{F}-\delta_{\Gamma,F})} $ 
 \end{center}
 converges.
\end{thm}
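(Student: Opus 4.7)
The plan is to exploit the geometric finiteness of $M$ to split the mass of $m_F$ into a compact-core contribution (which is always finite) plus finitely many cusp contributions, and then to identify the mass of each cusp with the parabolic series appearing in the statement. Since $M$ is geometrically finite, a small neighborhood of $\jC\Lambda\Gamma/\Gamma$ decomposes as the disjoint union of a compact set $K$ and finitely many cusps $\jC_{\xi_p}$, one for each orbit of bounded parabolic fixed point. Because $\widetilde{m}_F$ is locally finite on $T^1\widetilde{M}$, the projection $m_F$ has finite mass on $T^1 K$, and $m_F$ is supported on $\Omega$ which sits inside the union of these pieces. Thus $m_F(T^1 M) < \infty$ if and only if $m_F(T^1\jC_{\xi_p}) < \infty$ for each representative $\xi_p$.

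Next I would estimate $m_F(T^1\jC_{\xi_p})$ by lifting to a horoball $\jH_{\xi_p}$ centered at $\xi_p$ and integrating $\widetilde{m}_F$ over a fundamental domain for $\Gamma_{\xi_p}$. Using Hopf parametrization $v = (v_-,v_+,t)$ with respect to an origin $x$ on the boundary horosphere, one can (after discarding a set of finite mass) reduce to vectors whose forward endpoint $v_+$ is close to $\xi_p$ and whose backward endpoint $v_-$ remains in a fixed compact set of $\partial_\infty\widetilde{M}\setminus\{\xi_p\}$. Decomposing a neighborhood of $\xi_p$ in $\partial_\infty\widetilde{M}$ via the shadows $\O_xB(\gamma x,r_0)$ for $\gamma \in \Gamma_{\xi_p}$, which partition the neighborhood with bounded multiplicity, I would combine the Patterson quasi-invariance
\[
d\mu_x^F(\xi) = e^{-C_{F-\delta_{\Gamma,F},\,\xi}(x,\gamma x)}\,d\mu_{\gamma x}^F(\xi),
\]
with Proposition \ref{prop:major}(2) to show that $\mu_x^F(\O_xB(\gamma x,r_0))$ is comparable to $e^{\int_x^{\gamma x}(\widetilde{F}-\delta_{\Gamma,F})}$. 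The $dt$-integration in Hopf coordinates contributes the missing linear factor $d(x,\gamma x)$, since a geodesic entering and leaving the horoball $\gamma\jH_{\xi_p}$ spends time comparable to $d(x,\gamma x)$ inside. Meanwhile, the potential-gap denominator $D_{F,x}(v_+,v_-)^{-1}$ remains uniformly bounded above and below on this range, again by Proposition \ref{prop:major} and the $\Gamma$-invariance of $\widetilde{F}$. Assembling these estimates yields
\[
m_F(T^1\jC_{\xi_p}) \asymp \sum_{\gamma \in \Gamma_{\xi_p}} d(x,\gamma x)\, e^{\int_x^{\gamma x}(\widetilde{F}-\delta_{\Gamma,F})},
\]
which is the claimed equivalence.

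The main obstacle is to justify that the shadows indexed by $\Gamma_{\xi_p}$ really cover $\mu_x^F$-almost all of a neighborhood of $\xi_p$ with bounded overlap, so that the estimate above is sharp in both directions. Here the divergence assumption is crucial: by Theorem \ref{thm:HTS}, $(\Gamma,F)$ being of divergence type forces $\mu_x^F$ to be carried by the conical limit set $\Lambda_c\Gamma$ and to be non-atomic, so no mass is lost at $\xi_p$ itself; moreover, geometric finiteness implies $\Lambda\Gamma = \Lambda_c\Gamma \cup \Lambda_p\Gamma$ with bounded parabolic fixed points, so the Dirichlet tiling of a neighborhood of $\xi_p$ by $\Gamma_{\xi_p}$-translates of a fundamental domain controls everything uniformly. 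Once this comparison is in place, the series identity finishes the proof.
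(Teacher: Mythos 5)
The paper does not actually prove this statement: it is quoted verbatim from the literature, namely from \cite{MR1776078} for $F=0$ and from \cite{PPS} for a general H\"older potential, so there is no internal proof to compare against. Your outline reconstructs precisely the argument of those references: split $m_F$ over a neighborhood of the convex core into a compact piece (finite because the Hopf-parametrized measure is locally finite) plus finitely many cusp pieces, and evaluate each cusp's mass by the ``excursion'' computation, where the $\Gamma_{\xi_p}$-translates of a fundamental piece tile a deleted neighborhood of the bounded parabolic point with bounded multiplicity, the Patterson quasi-invariance together with Proposition \ref{prop:major} turns $\mu_x^F$ of each translate into $e^{\int_x^{\gamma x}(\widetilde F-\delta_{\Gamma,F})}$ up to uniform constants, and the $dt$-integration over the time spent in the horoball supplies the factor $d(x,\gamma x)$. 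The use of divergence type via Theorem \ref{thm:HTS} to rule out an atom at $\xi_p$ (so no mass escapes the shadow decomposition) is exactly the right point to flag. As a plan this is correct; the actual work lives in the three estimates you assert rather than prove --- the shadow lemma for Gibbs measures, the bounded-multiplicity covering near a bounded parabolic fixed point, and the penetration-time estimate $\asymp d(x,\gamma x)$ --- and in checking that the potential gap stays uniformly bounded on the relevant set of geodesics (they all pass within bounded distance of $x$ modulo $\Gamma_{\xi_p}$, which is what makes this work even for unbounded $F$). One small imprecision: the shadows cover with bounded multiplicity but do not partition; this does not affect the two-sided comparison.
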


\begin{Def}
 $(\Gamma ,F)$ satisfies the spectral gap property if for all parabolic points $ \xi_p \in \partial_{\infty} \widetilde{M}$,
 $$ \delta_{\Gamma_{\xi_p},F} < \delta_{\Gamma,F}. $$
\end{Def}

Proposition 2 of \cite{MR1776078} gives a criterion for this property for the zero potential. The following proposition is more general and 
applies to all Hölderian potentials.

\begin{prop}
 Let $\widetilde{M}$ be a Hadamard manifold with pinched negative curvature and $\Gamma$ a geometrically finite discrete group acting on
 it. Suppose there exists a bounded Hölderian potential $\widetilde{F}: T^1 \widetilde{M} \to \R$.\\
 If for all parabolic fixed point $\xi_p$ the couple $(\Gamma_{\xi_p},F)$ is of divergence type, then $(\Gamma,F)$ satisfies the spectral gap property.
 Moreover, the Gibbs measure $m^F$ associated to $(\Gamma,F)$ is finite.
\end{prop}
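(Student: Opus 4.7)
My plan is to first establish the spectral gap $\delta_{\Gamma_{\xi_p},F}<\delta_{\Gamma,F}$ for every parabolic fixed point $\xi_p$, and then to deduce the finiteness of $m^F$ by invoking the finiteness criterion stated at the beginning of this section.

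\textbf{Step 1: Spectral gap.} I would argue by contradiction. Suppose some parabolic fixed point $\xi_p$ satisfies $\delta_{\Gamma_{\xi_p},F}=\delta_{\Gamma,F}=:\delta$. Since $\widetilde{F}$ is bounded, Proposition~\ref{prop:exp}(2) gives $\delta\le\delta_\Gamma+\sup\widetilde{F}<\infty$, so the Patterson densities $(\mu_x^F)_{x\in\widetilde{M}}$ are well defined and non-atomic, as recalled at the end of Section~3 (Proposition~5.13 of \cite{PPS}). On the other hand, the divergence type hypothesis reads
$$
\sum_{\gamma\in\Gamma_{\xi_p}} e^{\int_x^{\gamma x}\widetilde{F}-\delta\, d(x,\gamma x)}=\infty.
$$
Through the shadow lemma of \cite{PPS} applied to the orbit $(\gamma x)_{\gamma\in\Gamma_{\xi_p}}$, which accumulates at $\xi_p$, this divergence forces a positive atom $\mu_x^F(\{\xi_p\})>0$ in the Patterson--Sullivan weak limit. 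The contradiction with non-atomicity yields the strict inequality $\delta_{\Gamma_{\xi_p},F}<\delta_{\Gamma,F}$.

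\textbf{Step 2: Finiteness of $m^F$.} With the spectral gap in hand, fix $\epsilon>0$ with $\delta_{\Gamma_{\xi_p},F}+\epsilon<\delta_{\Gamma,F}$. Proposition~\ref{prop:exp}(3) applied to $\Gamma_{\xi_p}$ yields a constant $C>0$ such that, for all large $n$,
$$
\sum_{\substack{\gamma\in\Gamma_{\xi_p}\\ n-1\le d(x,\gamma x)\le n}} e^{\int_x^{\gamma x}\widetilde{F}}\le C\, e^{n(\delta_{\Gamma_{\xi_p},F}+\epsilon)}.
$$
Partitioning $\Gamma_{\xi_p}$ into these annuli and weighting by $n\, e^{-\delta_{\Gamma,F} n}$ produces a geometrically decaying series, so
$$
\sum_{\gamma\in\Gamma_{\xi_p}} d(x,\gamma x)\, e^{\int_x^{\gamma x}(\widetilde{F}-\delta_{\Gamma,F})}<\infty.
$$
The finiteness criterion additionally requires $(\Gamma,F)$ itself to be of divergence type; I would obtain this from the same non-atomicity, since convergence type would, by Theorem~\ref{thm:HTS}, place the support of $\mu_x^F$ inside $\Lambda\Gamma\setminus\Lambda_c\Gamma\subseteq\Lambda_p\Gamma$, which is countable by geometric finiteness, and therefore inconsistent with a non-zero non-atomic measure. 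Applying the criterion then gives $\|m^F\|<\infty$.

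\textbf{Main difficulty.} The real work is the atom-creation step in Step 1: one must prove that the divergence of the parabolic Poincaré series at $s=\delta_{\Gamma,F}$ deposits a positive fraction of the normalizing mass of $\mu_{x,s}^F$ at $\xi_p$ in the weak limit. This requires comparing the divergence rates of $P_{x,\Gamma_{\xi_p},F}(s)$ and $P_{x,\Gamma,F}(s)$ as $s\to\delta^+$, and using the shadow-lemma estimates of \cite{PPS} with a careful accounting of the (in principle unbounded) multiplicity of overlapping shadows along the parabolic orbit converging to $\xi_p$; this is the place where the argument leans most heavily on the PPS toolbox.
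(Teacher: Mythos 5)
Your overall architecture is the paper's: establish the spectral gap first, then feed it into the finiteness criterion via the elementary bound $d(x,\gamma x)\le e^{\epsilon d(x,\gamma x)}$. Your Step 2 is essentially identical to the paper's, and your observation that the criterion also requires $(\Gamma,F)$ to be of divergence type --- which you settle by combining Theorem \ref{thm:HTS} with non-atomicity and the countability of $\Lambda\Gamma\setminus\Lambda_c\Gamma$ for a geometrically finite group --- is a point the paper leaves implicit; that is a genuine (and correct) addition. The problem is Step 1, where the decisive claim is exactly the one you flag as ``the main difficulty'' and do not prove. As sketched, the route is also doubtful on its own terms: divergence of $P_{x,\Gamma_{\xi_p},F}(s)$ at $s=\delta_{\Gamma,F}$ does not by itself force the weak limits of the approximating measures $\mu^F_{x,s}$ to charge $\xi_p$, because the parabolic sub-series can diverge at $\delta_{\Gamma,F}$ while remaining an asymptotically negligible fraction of the full normalizing series $P_{x,\Gamma,F}(s)$ as $s\to\delta_{\Gamma,F}^+$, in which case no mass need concentrate at $\xi_p$ in the limit. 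So ``comparing divergence rates'' is not a detail to be filled in later; without a lower bound on the ratio of the two Poincar\'e series it is the whole argument, and it may fail as stated.

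The paper avoids this entirely by working a posteriori with an already-constructed finite Patterson density $\mu_x^F$ of dimension $\delta_{\Gamma,F}$ rather than with the approximants $\mu^F_{x,s}$. Take a fundamental domain $\G\subset\partial_\infty\widetilde{M}$ for the action of $\Gamma_{\xi_p}$; the conformality relation $d\mu^F_x(\xi)=e^{-C_{F-\delta_{\Gamma,F},\,\xi}(x,gx)}\,d\mu^F_{gx}(\xi)$ together with the uniform cocycle bound $\bigl|C_{F,\xi}(x,gx)+\int_x^{gx}\widetilde F\bigr|\le K$ (available from Proposition \ref{prop:major} since $\widetilde F$ is bounded) gives $\mu_x^F(g\G)\ge e^{-K}\,\mu_x^F(\G)\,e^{\int_x^{gx}(\widetilde F-\delta_{\Gamma,F})}$. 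Summing over $g\in\Gamma_{\xi_p}$ against the finite total mass $\mu_x^F(\partial_\infty\widetilde{M})<\infty$ yields $P_{x,\Gamma_{\xi_p},F}(\delta_{\Gamma,F})<\infty$; the degenerate case $\mu_x^F(\G)=0$ would concentrate all the mass at $\xi_p$, contradicting non-atomicity. Since $(\Gamma_{\xi_p},F)$ is of divergence type, its Poincar\'e series diverges at $\delta_{\Gamma_{\xi_p},F}$, so convergence at $\delta_{\Gamma,F}$ forces the strict inequality $\delta_{\Gamma_{\xi_p},F}<\delta_{\Gamma,F}$. No shadow lemma and no multiplicity accounting are needed. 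If you replace your atom-creation step by this computation, the rest of your proof goes through.
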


\begin{proof}
To prove the first claim, we follow the ideas of \cite{MR1776078} when $F=0$.\\
Since the action of $\Gamma_{\xi_p}$ on $\partial_{\infty} \widetilde{M}$ has a fundamental domain $\G$ in $\partial_{\infty} \widetilde{M}$, we have
$$
\mu_x^F (\partial_{\infty} \widetilde{M}) = \sum\limits_{g \in \Gamma_{\xi_p}} \mu_x^F (g \G) + \mu_x^F (\Lambda \Gamma_{\xi_p}).
$$
Moreover, since there exists $K\in \R$ such that
$$
\left\{
      \begin{aligned}
	\mu_x^F (g \G) = \int_{\G} e^{-C_{F-\delta_{\Gamma,F}, \xi}(x,gx)} d\mu_{gx} (\xi)\\
	|C_{F,\xi} (x,gx) + \int_x^{gx} \widetilde{F}| \leq K,
      \end{aligned}
\right.
$$
we have
$$ \mu_x^F (g \G) \geq (e^{\int_x^{gx} \widetilde{F} - \delta_{\Gamma,F}}) ( e^{-K} \mu_x^F(\G) ) $$
and
$$ \infty > \mu_x^F (\partial_{\infty} \widetilde{X}) \geq \sum\limits_{g \in \Gamma_{\xi_p}} \mu_x^F (g \G)
   \geq C_0 \cdot P_{x,\Gamma_{\xi_p},F}(\delta_{\Gamma,F}).
$$
So, $\delta_{\Gamma,F} > \delta_{\Gamma_{\xi_p},F}.$\\

For the second claim, since $(\Gamma,F)$ satisfies the spectral gap property for $M = \widetilde{M} / \Gamma$ and
$$ \forall \epsilon >0,  \exists C_{\epsilon}>0, d(x,\gamma x) \geq C_{\epsilon} \Rightarrow 
   e^{\epsilon d(x,\gamma x)} > d(x,\gamma x), $$
we have

$$ \sum\limits_{\gamma \in \Gamma_{\xi_p}} d(x,\gamma x) e^{\int_x^{\gamma x} \widetilde{F} - \delta_{\Gamma, F}} \leq \sum\limits_{\gamma \in \Gamma_{\xi_p}} 
e^{\int_x^{\gamma x} \widetilde{F} - (\delta_{\Gamma, F} - \epsilon)} $$

Choosing $\epsilon$ small enough such that $\frac{\delta_{\Gamma,F} - \delta_{\Gamma_{\xi_p},F}}{2} > \epsilon$,
the series converges and the Gibbs measure is finite.
\end{proof}

\section{Construction of the potentials}
We now construct a $\Gamma-$invariant potential $\widetilde{H}: T^1 \widetilde{M} \to \R$ such that the associated Gibbs measure is finite and which critical
exponent associated to $(\Gamma_{\xi_p},H)$ is of divergence type.\\

Since $M$ is geometrically finite, the set $Par_{\Gamma}$ of parabolic points $\xi_p \in \partial_{\infty} \widetilde{M}$ intersecting the boundary of
the Dirichlet domain is finite.\\
We define for those parabolic points a family of disjoint horoballs $\{\jH_{\xi_p} (u_{0,\xi_p}) \}_{\xi_p}$
on $\widetilde{M}$ passing through a well chosen point $\widetilde{u}_{0,\xi_p}$ of the cusp.

For any $\widetilde{u}$ in $\jH_{\xi_p} (\widetilde{u}_{0,\xi_p})$, we define a height function $\rho: \widetilde{M} \to \R$
by the Buseman cocycle at $\widetilde{u}_{0,\xi_p}$:
$$ \rho(\widetilde{u}) = \beta_{\xi_p}(\widetilde{u},\widetilde{u}_{0,\xi_p}).$$
This cocycle coincides with the Gibbs cocycle for the potential $F= -1$.\\
The curve levels of this function are the horocycles based on $\xi_p$ passing through $\widetilde{u}$.\\

Let $t_n$ be a decreasing sequence of positive numbers converging to $0$.\\
One can construct a sequence $Y_n$ of positive numbers such that

$$
\left\{
      \begin{aligned}
	Y_{n+1} \geq Y_n + t_n - t_{n+1}, \\
	\sum \limits_{p \in \rho^{-1}(]Y_n , Y_{n+1}])} e^{d(x_0,px_0)(t_n - \delta_{\Gamma_{\xi_p}})} \geq 1.
      \end{aligned}
\right.
$$

for all $\xi_p \in Par_{\Gamma}$, we define a $\Gamma$-invariant map on $\jD \cap \jH_{\xi_p}(u_{0,\xi_p})$ as follows:\\
for every $\widetilde{u}$ in $\jH_{\xi_p} (u_{0,\xi_p})$,

$$ \widetilde{H}_1(\widetilde{u})=
\left\{
      \begin{aligned}
	t_n + Y_n -\rho (\widetilde{u}) \text{on } \rho^{-1}(]Y_n,Y_n+t_n-t_{n+1}]) , \\
	t_{n+1} \text{on } \rho^{-1}(]Y_n+t_n-t_{n+1},Y_{n+1}]).
      \end{aligned}
\right.
$$
We extend $\widetilde{H}_1$ to $\widetilde{M}$ as follows.\\
First, we extend it to $\jD$ by a constant function such that 
$$ \widetilde{H}_1: \jD \to \R$$
is Hölder-continuous. Next, we extend it to $\widetilde{M}$ as follows. For all $\gamma \in \Gamma$, if $\widetilde{x} \in \gamma \jD$
then
$$ \widetilde{H}_1 (\widetilde{x}) = \widetilde{H}_1 (\gamma^{-1} \widetilde{x}). $$

Let $\widetilde{H}$ be the $\Gamma-$invariant potential obtained by pulling back ${H}_1$ on $T^1 \widetilde{M}$.
We denote by $H: T^1 M \to \R$ the map induced by $\widetilde{H}$ on $T^1 M$.

\begin{lem}(Coudène \cite{C04})
 $\widetilde{H}$ is a $\Gamma-$invariant Hölderian potential such that for all parabolic fixed point $\xi_p$, the critical exponent associated with
 $(\Gamma_{\xi_p},H)$ is of divergence type.
\end{lem}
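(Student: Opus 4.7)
The plan is to establish two properties: that $\widetilde{H}$ is a Hölder‐continuous, $\Gamma$‐invariant potential, and that for every parabolic fixed point $\xi_p$ the Poincaré series of $(\Gamma_{\xi_p},H)$ diverges at its critical exponent.

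For the Hölder assertion, note that on the horoball $\jH_{\xi_p}(\widetilde{u}_{0,\xi_p})$ the function $\widetilde{H}_1$ depends only on the Busemann cocycle $\rho$: on the initial slab of each strip $\rho^{-1}(]Y_n,Y_{n+1}])$ it is affine of slope $-1$, and constant $t_{n+1}$ on the remainder. The internal breakpoint is $\rho=Y_n+t_n-t_{n+1}$, where both pieces equal $t_{n+1}$, and the matching across adjacent strips at $\rho=Y_{n+1}$ is guaranteed by the first defining inequality $Y_{n+1}\geq Y_n+t_n-t_{n+1}$. Thus $\widetilde{H}_1$ is continuous and $1$-Lipschitz in $\rho$, hence $1$-Lipschitz in the Riemannian metric since $\rho$ itself is. Extending by the matching constant on the rest of $\jD$ and then by $\widetilde{H}_1(\widetilde{x})=\widetilde{H}_1(\gamma^{-1}\widetilde{x})$ for $\widetilde{x}\in\gamma\jD$ preserves Lipschitz regularity, because $\Gamma$ acts by isometries and the boundary identifications on $\partial\jD$ are exactly those enforced by the rule. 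Pulling back to $T^1\widetilde{M}$ yields the desired Hölder $\Gamma$-invariant potential $\widetilde{H}$.

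For the divergence assertion, I first identify $\delta_{\Gamma_{\xi_p},H}$. Since $\widetilde{H}\geq 0$, clearly $\delta_{\Gamma_{\xi_p},H}\geq\delta_{\Gamma_{\xi_p}}$. Conversely, for $\gamma\in\Gamma_{\xi_p}$ with $d(x_0,\gamma x_0)=n$ large, the geodesic from $x_0$ to $\gamma x_0$ reaches its maximum height in a strip of index $k(n)\to\infty$; since $\widetilde{H}\leq t_{k(n)}$ along the portion of the geodesic in that strip and the geodesic spends only a logarithmic fraction of its length below the apex (standard cusp geometry), one has $\int_{x_0}^{\gamma x_0}\widetilde{H}=o(n)$, and Proposition~2.1(3) gives $\delta_{\Gamma_{\xi_p},H}\leq\delta_{\Gamma_{\xi_p}}$. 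To prove divergence at this common value, I partition $\Gamma_{\xi_p}$ into classes $\Gamma_{\xi_p}^{(n)}$ according to the strip containing the apex of $[x_0,\gamma x_0]$. Using $\widetilde{H}\geq t_n$ everywhere on $\rho\leq Y_n$ (since $t_{j+1}\geq t_n$ for all $j\leq n-1$), together with the same geometric estimate on time spent below the apex, I derive the lower bound $\int_{x_0}^{\gamma x_0}\widetilde{H}\geq t_n\,d(x_0,\gamma x_0)-O(1)$ for all $\gamma\in\Gamma_{\xi_p}^{(n)}$. Consequently
$$P_{x_0,\Gamma_{\xi_p},H}(\delta_{\Gamma_{\xi_p}})\;\geq\; C\sum_n\sum_{\gamma\in\Gamma_{\xi_p}^{(n)}}e^{d(x_0,\gamma x_0)(t_n-\delta_{\Gamma_{\xi_p}})}\;\geq\; C\sum_n 1\;=\;+\infty,$$
where the second inequality is precisely the defining property of the sequence $(Y_n)$ chosen in the construction.

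The main obstacle is the geometric comparison $\int_{x_0}^{\gamma x_0}\widetilde{H}\geq t_n\,d(x_0,\gamma x_0)-O(1)$: it requires converting the fact that a geodesic in a cusp with apex in strip $n$ concentrates almost all of its length near that apex into a uniform integral estimate. This will rely on the exponential horocyclic contraction in pinched negative curvature together with the Gibbs cocycle bounds of Proposition~2.1, used to control the contribution of the portion of the geodesic that lies outside the strip of the apex.
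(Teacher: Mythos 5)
First, a point of reference: the paper itself does not prove this lemma --- it is imported verbatim from Coudène \cite{C04} --- so your reconstruction can only be measured against the expected argument. Your overall architecture is the standard and correct one: Lipschitz regularity of $\widetilde{H}_1$ as a function of the Busemann cocycle $\rho$ (hence of the Riemannian distance), identification of $\delta_{\Gamma_{\xi_p},H}$ with $\delta_{\Gamma_{\xi_p}}$ by splitting the geodesic at a fixed height, partition of $\Gamma_{\xi_p}$ according to the strip containing the apex of $[x_0,\gamma x_0]$, and appeal to the defining inequality of the sequence $(Y_n)$.

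There is, however, a concrete gap in your central estimate $\int_{x_0}^{\gamma x_0}\widetilde{H}\geq t_n\,d(x_0,\gamma x_0)-O(1)$ for $\gamma\in\Gamma_{\xi_p}^{(n)}$. Your justification is that $\widetilde{H}\geq t_n$ on $\{\rho\leq Y_n\}$, which is true; but by your own definition of $\Gamma_{\xi_p}^{(n)}$ the apex of $[x_0,\gamma x_0]$ lies in $\rho^{-1}(]Y_n,Y_{n+1}])$, so a sub-segment of the geodesic of length up to roughly $2(Y_{n+1}-Y_n)$ lies \emph{above} $Y_n$, where one only has $\widetilde{H}\geq t_{n+1}$. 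The resulting deficit is of order $(t_n-t_{n+1})(Y_{n+1}-Y_n)$, which is not $O(1)$: nothing in the construction bounds $Y_{n+1}-Y_n$ (on the contrary, $Y_{n+1}$ must be taken large for the partial Poincaré sum over the strip to reach $1$), and the surplus $\sum_{j<n}(t_{j+1}-t_n)(Y_{j+1}-Y_j)$ from the lower strips need not compensate, since its last term vanishes. For instance $t_n=2^{-n}$ and $Y_{n+1}-Y_n\asymp 4^n$ make $\int_{x_0}^{\gamma x_0}\widetilde{H}-t_n\,d(x_0,\gamma x_0)\to-\infty$ along suitable $\gamma$. The uniform bound that does hold for apexes in strip $n$ is $\int_{x_0}^{\gamma x_0}\widetilde{H}\geq t_{n+1}\,d(x_0,\gamma x_0)-O(1)$, which no longer matches the exponent $t_n$ in the defining property of $(Y_n)$, so your final display does not close as written. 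The repair is an off-by-one adjustment: either take $\Gamma_{\xi_p}^{(n)}$ to consist of the $\gamma$ whose apex lies in $]Y_{n-1},Y_n]$, so that the whole geodesic stays in $\{\rho\leq Y_n\}$ and your bound is valid, reading the defining inequality of $(Y_n)$ accordingly; or add to the construction the harmless extra requirement $(t_n-t_{n+1})(Y_{n+1}-Y_n)\leq 1$ after refining the sequence of levels. A smaller point: the phrase ``the geodesic spends only a logarithmic fraction of its length below the apex'' is backwards; what you need (and what is true) is that the portion below any \emph{fixed} height $Y_N$ has length $O(1)$, whence $\frac{1}{L}\int\widetilde{H}\leq t_N+o(1)$ for every $N$, giving $\delta_{\Gamma_{\xi_p},H}\leq\delta_{\Gamma_{\xi_p}}$.
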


We have therefore constructed a $\Gamma$-invariant Hölderian potential on $T^1 M$ such that the associated Gibbs
measure is finite and is supported on $\Omega$.\\

Remark that the divergence of the parabolic subgroups only depends on the value of the potential in the cusp. In the previous
construction, we made the assumption that the potential was constant on 
$$T^1 M_0 = T^1 (\jC \Lambda \Gamma \bigcup\limits_{\xi_p \in Par_{\Gamma}} \jH_{\xi_p}) / \Gamma .$$
However, taking
any bounded potential such that the resulting function on $T^1 M_0$ is Hölder-continuous, the associated Gibbs measure will still be
finite.\\
We now choose $p\in T^1 M = T^1\widetilde{M} / \Gamma$ such that $\phi_{t} (p)$ is periodic. We will denote by $\O(p)$ the closed curve
$\phi_{\R}(p)$ and assume that $\O(p) \subset T^1 M_0$.\\
For all $n \in \N$, we define a Lipschitz-continuous potential by

$$ F_n (x) = \max \{ c_n - c_n d (\O(p),x) ; H(x) \}.$$

\begin{prop}
 For all $n \in \N$, the critical exponent $\delta_{\Gamma,F_n}$ is finite. Moreover, we have 
 $$c_n \leq \delta_{\Gamma,F_n} \leq \delta_{\Gamma} + c_n.$$
\end{prop}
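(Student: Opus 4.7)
\emph{Plan.} The proposition is a direct application of Proposition~\ref{prop:exp}. By construction, the $\Gamma$-invariant potential $\widetilde H$ is bounded on $T^1\widetilde M$: its values $t_n$ in the cusps decrease to $0$, and it is constant on the compact quotient $T^1M_0$. I assume throughout (as one may arrange by choice of the sequence) that $c_n\geq \sup \widetilde H$. Part~(2) of Proposition~\ref{prop:exp} then yields the upper bound and finiteness, while part~(3) combined with an explicit computation along the periodic orbit gives the lower bound.

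\emph{Upper bound and finiteness.} For every $v\in \pi^{-1}(\jC\Lambda\Gamma)$, writing $\bar v$ for the image of $v$ in $T^1M$, one has
$$\widetilde F_n(v)=\max\bigl(c_n-c_n\, d(\O(p),\bar v),\;\widetilde H(v)\bigr)\;\leq\;c_n,$$
because $d(\O(p),\cdot)\geq 0$ and $c_n\geq \sup\widetilde H$. Proposition~\ref{prop:exp}(2) then gives
$$\delta_{\Gamma,F_n}\;\leq\;\delta_\Gamma + \sup_{\pi^{-1}(\jC\Lambda\Gamma)}\widetilde F_n\;\leq\;\delta_\Gamma+c_n\;<\;\infty,$$
finiteness of $\delta_\Gamma$ being the standard bound by the exponential volume growth rate of $\widetilde M$ under pinched negative curvature.

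\emph{Lower bound.} Let $g_0\in\Gamma$ be the hyperbolic isometry whose translation axis $\widetilde\gamma_p\subset\widetilde M$ projects onto $\O(p)$, and set $\ell_0=\ell(g_0)>0$. Pick any $\widetilde x\in\widetilde\gamma_p$. For every $k\geq 1$ the geodesic from $\widetilde x$ to $g_0^k\widetilde x$ is the arc of $\widetilde\gamma_p$ of length $k\ell_0$; its unit tangent vectors project into $\O(p)$, on which $d(\O(p),\cdot)=0$ and $c_n\geq \widetilde H$, so $\widetilde F_n\equiv c_n$ along this arc. Hence
$$\int_{\widetilde x}^{g_0^k\widetilde x}\widetilde F_n\;=\;c_n\cdot k\ell_0\;=\;c_n\cdot d(\widetilde x,g_0^k\widetilde x).$$
Fix any $c>\ell_0$. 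For each sufficiently large $N$ there exists $k_N\in\N$ with $N-c\leq k_N\ell_0\leq N$, and retaining only the single term $\gamma=g_0^{k_N}$ in the sum of Proposition~\ref{prop:exp}(3) (applied with base point $\widetilde x$) gives
$$\sum_{\gamma:\;N-c\leq d(\widetilde x,\gamma\widetilde x)\leq N}e^{\int_{\widetilde x}^{\gamma\widetilde x}\widetilde F_n}\;\geq\;e^{c_n(N-c)}.$$
Taking $\tfrac1N\log$ and the $\limsup$ yields $\delta_{\Gamma,F_n}\geq c_n$.

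\emph{Main difficulty.} Nothing is genuinely hard here; the only point to be careful about is the compatibility $c_n\geq \sup\widetilde H$, which must be recorded as a standing hypothesis on the sequence (it is consistent with the asymptotic behaviour $c_n\to\infty$ used later). Beyond that, the argument reduces to a pointwise pinch of $\widetilde F_n$ for the upper bound and a Poincaré-series estimate restricted to the cyclic subgroup $\langle g_0\rangle$ for the lower bound.
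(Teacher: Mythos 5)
Your proof is correct and follows essentially the same route as the paper: the upper bound via Proposition~\ref{prop:exp}(2) using $\sup F_n=c_n$, and the lower bound by restricting the Poincar\'e-type sum to the cyclic group generated by the hyperbolic isometry whose axis projects to $\O(p)$, where $F_n\equiv c_n$. Your explicit standing hypothesis $c_n\geq\sup\widetilde H$ just makes precise what the paper assumes implicitly when it writes $c_k=\sup_x F_k(x)$.
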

\begin{proof}
Since $c_k = \sup\limits_{x \in T^1 M} F_k(x)$, the upper bound $\delta_{\Gamma,F_k} \leq \delta_{\Gamma} + c_k$ is evident by the
second claim of Proposition \ref{prop:exp}. \\
Let $p$ be the periodic point of $T^1M$ defined above and $h\in \Gamma$ the generator of the isometry group fixing the periodic 
orbit $\phi_{\R}(p)$. Let $H= <h>$ and denote by $\ell$ the length of $h$. By the very definition of critical exponents, we have

\begin{center}\begin{align}
 \delta_{\Gamma,F_k} &= & \limsup\limits_{n \to \infty} \frac{1}{n} \log  \sum\limits_{\gamma \in \Gamma,\;n- \ell< d(x,\,\gamma x)\leq n} e^{\int_x^{\gamma x}\widetilde{F}_k} \\
  &\geq & \limsup\limits_{n \to \infty} \frac{1}{n} \log  \sum\limits_{\gamma \in H ,\;n-\ell< d(x,\,\gamma x)\leq n} e^{\int_x^{\gamma x}\widetilde{F}_k}.
\end{align}\end{center} 
Since the critical exponent does not depend on the choice of a base point, one can choose $x = \widetilde{p}$ where $\widetilde{p}$ is a lift of
$p$ on $T^1 \widetilde{M}$. Therefore, the fact that the
value of the potential on $\O(p)$ is constant, equal to $c_k$ implies that
$$ \int_{\widetilde{p}}^{\gamma {\widetilde{p}}}\widetilde{F}_k = d(x, \gamma x ) c_k \geq (n- \ell) c_k$$
which gives
\begin{center}\begin{align}
 \delta_{\Gamma,F_k} &\geq  \limsup\limits_{n \to \infty} \frac{1}{n} \log  \sum\limits_{\gamma \in H ,\;n-\ell< d(x,\,\gamma x)\leq n} e^{(n-\ell) c_k}
 \geq \limsup\limits_{n \to \infty} \frac{1}{n} \log \left( A_x e^{(n- \ell)c_k} \right),
\end{align}\end{center}
where $A_x = \sharp \{\gamma \in H : n- \ell < d(x,\gamma x) \leq n \}$.\\
Since the group $H$ is generated by a hyperbolic isometry $h$, for all $\gamma \in H$, there exists $i \in \N$ such that $\gamma = 
h^i$ and $\ell (\gamma)= \ell (h^i) = i \ell(h).$ Therefore the quantity $A_x$ does not depend on $n$ and 
$$ \delta_{\Gamma,F_k} \geq c_k.$$
which concludes the proof.
\end{proof}
Therefore, the Gibbs measure $m_{F_n}$ associated with $(\Gamma,F_n)$ of dimension $\delta_{\Gamma,F_n}$ exists for all $n\in \N$.\\

\section{Proof of the main theorem}
Let $D_{\O(p)}$ be the Dirac measure supported on $\O(p)$. We prove Theorem \ref{thm:princ} which states the following.\\
 \textit{
 Let $M$ be a geometrically finite, negatively curved manifold and $\phi_t$ its geodesic flow.
 If $\phi_t$ is topologically mixing on $\Omega$, then the set of probability measures that are mixing with respect
 to the geodesic flow is dense in $\M^1 (T^1 M)$ for the weak topology.}\\
 
 Here is our strategy: since the geodesic flow on a manifold with pinched negative curvature satisfies the 
 closing lemma (see \cite{MR1441541}) and  admits a local product structure, we use the following result.
 
 \begin{prop}(Coudène-Schapira \cite{MR2735038})
  Let $M$ be a complete, connected Riemannian manifold with pinched negative curvature and $\phi_t$ its geodesic flow.
  Then the set of normalized Dirac measures on periodic orbits is dense in the set of all invariant measures $\M^1 (T^1 M)$.
 \end{prop}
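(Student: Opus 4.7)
My plan is a two-stage reduction followed by a gluing step: first approximate an arbitrary invariant measure by convex combinations of ergodic measures, then approximate each ergodic measure by a single periodic-orbit Dirac, and finally realize a convex combination of such Diracs as a single periodic-orbit Dirac. Concretely, the ergodic decomposition theorem writes any $\mu \in \M^1(T^1M)$ as an integral $\int \nu\,d\tau(\nu)$ of ergodic measures, and approximating this integral by finite Riemann sums reduces the task to approximating $\sum_{i=1}^k a_i \nu_i$ for ergodic $\nu_i$. This in turn splits cleanly into (a) approximating each $\nu_i$ by a periodic-orbit Dirac, and (b) realizing a convex combination of periodic-orbit Diracs by a single periodic-orbit Dirac.

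For step (a), fix an ergodic $\nu$, a small $\varepsilon > 0$, and finitely many bounded continuous test functions $f_1,\dots,f_m$. By Birkhoff's theorem, $\nu$-a.e.\ vector $v$ is generic, so $\frac{1}{T}\int_0^T \delta_{\phi_t v}\,dt$ converges weakly to $\nu$. Since $\nu$ is supported on the non-wandering set $\Omega$, Poincaré recurrence furnishes such a $v$ together with arbitrarily large times $T$ satisfying $d(\phi_T v, v) < \varepsilon$; one may additionally arrange for the segment $\phi_{[0,T]}(v)$ to spend most of its time in a fixed compact set $K$ by selecting $K$ with $\nu(K)$ close to $1$. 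The closing lemma for geodesic flows in pinched negative curvature (cf.\ \cite{MR1441541}) then produces a periodic orbit $\O(p)$ of period close to $T$ that shadows $\phi_{[0,T]}(v)$ uniformly at distance $O(\varepsilon)$. Uniform continuity of the $f_j$ on a compact neighborhood of $K$ yields that $D_{\O(p)}$ tests against the $f_j$ within $O(\varepsilon)$ of $\nu$.

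For step (b), given periodic orbits $\O(p_i)$ approximating the $\nu_i$ and weights $a_i$, I would fix a large $T$ and concatenate orbit arcs of lengths $\approx a_i T$ along the $\O(p_i)$ into a pseudo-orbit, using the local product structure to insert short heteroclinic connections in intersections of stable and unstable leaves between consecutive pieces. One further application of the closing lemma transforms this pseudo-orbit into an honest periodic orbit whose Dirac measure spends a fraction $\approx a_i$ of its total period near $\O(p_i)$, and therefore weakly approximates $\sum_i a_i D_{\O(p_i)}$ and hence $\mu$.

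The main obstacle is the non-compactness of $T^1 M$: a Birkhoff-generic orbit of $\nu$ may excurse deep into cusps or funnels where the shadowing estimates of the closing lemma degrade, so the choice of generic point and of the closing time $T$ must be coordinated with a compact set $K$ carrying most of the mass of $\nu$. The heteroclinic gluing in step (b) is similarly delicate outside the uniformly hyperbolic compact setting, and this is precisely the place where the local product structure hypothesis of \cite{MR2735038} plays its essential role.
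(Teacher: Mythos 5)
The paper does not prove this proposition: it is imported verbatim from Coud\`ene--Schapira \cite{MR2735038}, so there is no internal proof to compare against. Your outline (ergodic decomposition, closing lemma to approximate ergodic measures by periodic ones, then a specification-type gluing) is exactly the Sigmund strategy that the cited reference adapts to the non-compact setting, and step (a) as you describe it is essentially correct: in pinched negative curvature the closing-lemma constants depend only on the pinching bounds, so excursions into cusps do not degrade the shadowing, and the error from the time spent outside your compact set $K$ is controlled by boundedness of the test functions.

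The genuine gap is in step (b). You propose to ``insert short heteroclinic connections in intersections of stable and unstable leaves between consecutive pieces,'' but the local product structure only produces such a connecting point when the endpoint of one arc and the starting point of the next lie within a fixed small distance of each other. Two periodic orbits $\O(p_i)$ and $\O(p_{i+1})$ arising from step (a) need not come anywhere near each other, so the product $[x,y]$ you want to form simply does not exist. What is needed in addition is topological transitivity of the flow on $\Omega$ (which holds because $\Gamma$ is non-elementary): transitivity supplies an orbit segment of \emph{bounded but not short} length travelling from a neighborhood of a point of $\O(p_i)$ to a neighborhood of a point of $\O(p_{i+1})$, and the local product structure plus the closing lemma are then applied at the two ends of that connecting segment. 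One must then check that the total connecting time stays bounded as $T\to\infty$, so that the resulting single periodic orbit spends asymptotic fraction $a_i$ of its period near $\O(p_i)$. Without invoking transitivity (or, equivalently, the density of a single orbit in $\Omega$), the gluing step does not go through; this is precisely the extra dynamical input that \cite{MR2735038} uses alongside the local product structure.
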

 It is therefore clear that the following proposition implies Theorem \ref{thm:princ}.
 \begin{prop} \label{prop:conv}
 For all $p \in T^1 M$ such that $\O(p)= \phi_{\R} (p)$ is periodic, there exists a sequence $\{m_k \}_{k \in \N}$ of measures satisfying
 the following properties
 \begin{enumerate}
  \item $m_k$ is a probability measure which is mixing with respect to the geodesic flow,
  \item $m_k \rightharpoonup D_{\O(p)}$.
 \end{enumerate}
 \end{prop}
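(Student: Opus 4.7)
The plan is to take $m_k = m^{F_k}/\|m^{F_k}\|$ for a sequence $c_k \to \infty$ (chosen with $c_k > \sup H$), and to verify in turn that $m_k$ is a well-defined mixing probability measure and that $m_k \rightharpoonup D_{\O(p)}$.

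For finiteness and mixing of $m^{F_k}$, the key observation is that $\O(p) \subset T^1 M_0$ stays at uniformly positive distance from the cusps. Consequently, for $x$ in a cusp one has $d(\O(p),x) \geq 1$, so $c_k - c_k d(\O(p),x) \leq 0 \leq H(x)$ and hence $F_k(x) = H(x)$ there: the two potentials agree in the cusps. Thus $\delta_{\Gamma_{\xi_p},F_k} = \delta_{\Gamma_{\xi_p},H}$ and the lemma of Coud\`ene recalled at the end of Section 5 ensures that $(\Gamma_{\xi_p},F_k)$ is of divergence type for every parabolic fixed point $\xi_p$. Since $F_k$ is bounded and H\"older-continuous, the proposition of Section 4 yields the spectral gap property for $(\Gamma,F_k)$ together with finiteness of $m^{F_k}$. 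Combined with the topological mixing assumption, the characterization of Section 3 then gives that the geodesic flow is mixing with respect to $m_k$.

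For the weak convergence $m_k \rightharpoonup D_{\O(p)}$, the natural tool is the variational principle. Since $F_k \equiv c_k$ on $\O(p)$ and $D_{\O(p)}$ is a $\phi_t$-invariant probability measure of zero entropy, testing with $D_{\O(p)}$ yields
\[ \delta_{\Gamma,F_k} = P(\Gamma,F_k) \geq h_{D_{\O(p)}}(\phi_t) + \int F_k\, dD_{\O(p)} = c_k. \]
On the other hand, Theorem \ref{thm:varp} applied to $m_k$, combined with the uniform entropy bound $h_{m_k}(\phi_t) \leq \delta_\Gamma$ (a consequence of the variational principle for the zero potential), gives
\[ \int F_k\,dm_k \;=\; \delta_{\Gamma,F_k} - h_{m_k}(\phi_t) \;\geq\; c_k - \delta_\Gamma. \]
For any $\eta > 0$ and $k$ large enough, one has $F_k \leq c_k(1-\eta)$ on $V_\eta^c$ where $V_\eta := \{x : d(\O(p),x) < \eta\}$, while $F_k \leq c_k$ everywhere, so
\[ \int F_k\,dm_k \;\leq\; c_k\, m_k(V_\eta) + c_k(1-\eta)\bigl(1 - m_k(V_\eta)\bigr) \;=\; c_k(1-\eta) + c_k\eta\, m_k(V_\eta). \]
Comparing the two bounds yields $m_k(V_\eta) \geq 1 - \delta_\Gamma/(c_k\eta) \to 1$. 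Since the closure of $V_\eta$ is compact for $\eta$ small, this shows $\{m_k\}$ is tight and every weak limit $m_\infty$ is a $\phi_t$-invariant probability measure supported on $\O(p)$; the unique such measure being $D_{\O(p)}$ itself, we conclude $m_k \rightharpoonup D_{\O(p)}$.

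The main technical obstacle is the uniform entropy bound $h_{m_k}(\phi_t) \leq \delta_\Gamma$, which ultimately rests on $\htop(\phi_t|_\Omega) = \delta_\Gamma$ for geometrically finite manifolds (Theorem \ref{thm:varp} with $F=0$). Beyond this, one must check that the behavior of $F_k$ in the cusps is genuinely unchanged from that of $H$ for every $k$, so that the finiteness machinery of Section 4 can be invoked uniformly along the whole sequence; this was the point of choosing the profile of $F_k$ so as to be supported near $\O(p) \subset T^1 M_0$.
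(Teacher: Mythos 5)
Your proposal is correct and follows essentially the same route as the paper: the same potentials $F_k=\max\{c_k-c_kd(\O(p),\cdot),H\}$, finiteness and mixing of $m^{F_k}$ via the divergence of the parabolic subgroups (Sections 3--5), and weak convergence via the variational principle combined with the entropy bound $h_{m_k}(\phi_t)\leq\delta_\Gamma$. Your two-sided estimate on $\int F_k\,dm_k$ is algebraically identical to the paper's Lemma \ref{lem:majvar}, and your tightness/portmanteau argument is the content of Lemma \ref{lem:cv}.
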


Let $\htop(\phi_t)$ be the topological entropy of the geodesic flow on $T^1 M$ and $h_{\mu}(\phi_t)$ the measure theoretic entropy 
of the geodesic flow with respect to $\mu$. D. Sullivan proved in \cite{Su} that if the Bowen-Margulis measure $m_{BM}$
(\textit{i.e} the Gibbs measure associated with the potential $F=0$) is finite then
$$ \delta_{\Gamma} = h_{m_{BM}}(\phi_t) .$$

Using a result of C.J.Bishop and P.W.Jones \cite{BJ} connecting the critical exponent $\delta_{\Gamma}$ with the Hausdorff dimension of the
conical limit set of $\Gamma$, J.P.Otal and M.Peigné proved that for all $\phi_t-$invariant probability measures $m
\in \M^1(T^1\widetilde{M})$ which are not the Bowen-Margulis measure, we have the strict inequality,
$$ h_{\mu}(\phi_t) < \delta_{\Gamma} .$$
We refer to F. Ledrappier \cite{ENSML} for a survey of these results.

\begin{thm} \cite{otal2004}
Let $\widetilde{M}$ be a simply connected, complete Riemannian manifold with pinched negative curvature and $\Gamma$ be
a non-elementary discrete group of isometries of $\widetilde{M}$, then
$$ \htop (\phi_t)= \delta_{\Gamma}. $$
Moreover, there exists a probability measure $\mu$ maximizing the entropy if and only if the Bowen-Margulis measure is finite and
$\mu= m_{BM}$.
\end{thm}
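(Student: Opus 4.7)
The plan is to derive both assertions from the variational principle (Theorem \ref{thm:varp}) specialised to the trivial potential $F\equiv 0$, combined with the strict entropy inequality of Otal--Peigné mentioned just above the statement.

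For the identity $\htop(\phi_t)=\delta_\Gamma$, I would first observe that when $F=0$ the Poincaré series and its critical exponent in Section~2.2 reduce to the usual quantities, so $\delta_{\Gamma,0}=\delta_\Gamma$. Applying part~(1) of Theorem~\ref{thm:varp} gives $P(\Gamma,0)=\delta_\Gamma$. By the definition of topological pressure,
$$P(\Gamma,0)=\sup_{m\in\M^1(T^1M)}\Bigl(h_m(\phi_t)+\int_{T^1M}0\,dm\Bigr)=\sup_{m\in\M^1(T^1M)}h_m(\phi_t),$$
which is exactly the variational expression for $\htop(\phi_t)$. The equality $\htop(\phi_t)=\delta_\Gamma$ follows.

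For the sufficiency in the equilibrium-state statement, I apply part~(2) of Theorem~\ref{thm:varp} with $F=0$. The negative part of the zero potential is trivially integrable, so if $m_{BM}$ is finite, then its normalisation $m_{BM}/\|m_{BM}\|$ is the unique equilibrium state for $(\Gamma,0)$; since an equilibrium state for the zero potential is by definition an entropy-maximising probability measure, this proves one direction.

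For the necessity, I would argue as follows. Suppose $\mu\in\M^1(T^1M)$ realises $h_\mu(\phi_t)=\htop(\phi_t)=\delta_\Gamma$. The key external input, which I expect to be the main obstacle and which I would simply invoke as a black box from Otal--Peigné (via Bishop--Jones), is the strict inequality $h_m(\phi_t)<\delta_\Gamma$ for every invariant probability measure $m$ on $T^1\widetilde{M}$ that is not the Bowen--Margulis measure. This forces the maximiser $\mu$ to coincide with $m_{BM}/\|m_{BM}\|$; in particular $m_{BM}$ must be finite, which closes the equivalence. The routine parts of the proof are just translations between the pressure/entropy formalism of Theorem~\ref{thm:varp} and the geometric definitions of $\delta_\Gamma$ and $m_{BM}$; the genuine content lies in the Otal--Peigné strict inequality, whose proof I would not reproduce here.
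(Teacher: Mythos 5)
The paper does not actually prove this statement: it is imported verbatim from Otal--Peigné \cite{otal2004}, so there is no internal proof to compare against. Your reduction is nonetheless sound and explains why a bare citation suffices here: setting $F=0$ in Theorem \ref{thm:varp} gives $\sup_{m}h_m(\phi_t)=P(\Gamma,0)=\delta_{\Gamma,0}=\delta_\Gamma$, and part (2) of that same theorem already yields \emph{both} directions of the equivalence --- if $m_{BM}$ is finite its normalisation is the unique equilibrium state for the zero potential, and ``otherwise there exists no equilibrium state,'' which is exactly the nonexistence of an entropy maximiser. So your appeal to the Bishop--Jones/Otal--Peigné strict inequality $h_m(\phi_t)<\delta_\Gamma$ for the necessity direction is correct but redundant given the tools you have already invoked. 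The one point you pass over too quickly is the identification of $\htop(\phi_t)$ with $\sup_m h_m(\phi_t)$: on a non-compact phase space this is not a tautology unless one \emph{defines} topological entropy by the variational expression (which is what Otal--Peigné do, and computing that supremum is the genuine content of their theorem); if $\htop$ is meant in the Bowen sense on the non-wandering set, an extra argument is required and cannot be extracted from Theorem \ref{thm:varp} alone.
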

We begin the proof of the main theorem. First, we state a general result which holds true for any metric space $X$ satisfying a variational
principle.
In the next claim, we suppose that $F: X \to \R$ is a measurable function such that there exist
an invariant compact set $K \subset X$ and a neighborhood $V$ of
$K$ satisfying the following assumptions:
\begin{center}
 $ \left\{
 \begin{aligned}
   \forall x \in K, F(x) = c = \sup\limits_{x \in X} F(x)\\
   \sup\limits_{x\in X \backslash V} F (x)= c' < c.
  \end{aligned}
  \right.$
 \end{center}
We say that a probability measure $\mu$ with finite entropy is an equilibrium state for a potential $F:X \to \R$ if it achieves the supremum of
 $$ m \mapsto h_m (\phi_t) + \int_X F dm $$
over all invariant probability measures with finite entropy.

\begin{lem}\label{lem:majvar}
 Let $X$ be a metric space, $\phi_t$ a flow defined on $X$ and $F:X \to \R$, $K$, $V$ defined as above.\\
 Suppose there exists an equilibrium state $\mu$ for $F$, then
 $$ \mu(X \backslash V) \leq \frac{h_{\mu}(\phi_t)}{c-c'}. $$
\end{lem}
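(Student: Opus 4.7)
The plan is to exploit the variational inequality satisfied by the equilibrium state $\mu$, comparing it with any invariant probability measure supported on $K$. Since $K$ is compact and invariant under $\phi_t$, Krylov--Bogolyubov guarantees at least one invariant probability measure $\nu$ supported on $K$ (in the intended application $K$ will be a periodic orbit $\O(p)$, and $\nu$ will simply be the normalized Dirac measure on it, which has entropy zero and hence is a legitimate comparator for the variational principle restricted to measures of finite entropy).

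The core inequality is immediate: because $F \equiv c$ on $K \supset \mathrm{supp}(\nu)$, the variational definition of equilibrium state gives
\begin{equation*}
h_\mu(\phi_t) + \int_X F\, d\mu \;\geq\; h_\nu(\phi_t) + \int_X F\, d\nu \;=\; h_\nu(\phi_t) + c \;\geq\; c.
\end{equation*}
On the other hand, splitting the integral over $V$ and $X\setminus V$ and using the two hypotheses on $F$,
\begin{equation*}
\int_X F\, d\mu \;\leq\; c\,\mu(V) + c'\,\mu(X\setminus V) \;=\; c - (c-c')\,\mu(X\setminus V).
\end{equation*}
Substituting this upper bound into the variational inequality and cancelling $c$ on both sides leaves
\begin{equation*}
h_\mu(\phi_t) \;\geq\; (c-c')\,\mu(X\setminus V),
\end{equation*}
which, since $c > c'$, rearranges to the claimed bound $\mu(X\setminus V)\leq h_\mu(\phi_t)/(c-c')$.

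The argument is essentially a one-line manipulation of the variational principle; the only potential subtlety is ensuring a valid test measure on $K$ with finite entropy, but this is automatic in the geometric setting where the lemma will be applied (periodic orbit, entropy zero). I would present the proof in exactly the three steps above: produce the comparator $\nu$ on $K$, write down the variational inequality $h_\mu + \int F\,d\mu \geq h_\nu + c$, and then combine with the elementary upper bound on $\int F\,d\mu$.
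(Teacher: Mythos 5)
Your proof is correct and follows essentially the same route as the paper: compare the equilibrium state against an invariant measure supported on $K$, use $F\equiv c$ on $K$ together with $h\geq 0$ to get $h_\mu(\phi_t)+\int_X F\,d\mu\geq c$, and bound $\int_X F\,d\mu$ by splitting over $V$ and $X\setminus V$. Your added remark on producing the comparator measure (Krylov--Bogolyubov, or the normalized Dirac measure on the periodic orbit with zero entropy) is a small but welcome precision that the paper leaves implicit.
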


\begin{proof}
 Let $m_K$ be a probability measure supported on $K$. Since $\mu$ realises the supremum of
 $$ m \in \M^1(X) \mapsto h_m (\phi_t) + \int_X F dm $$
 we have
 $$ h_{\mu} (\phi_t) + \int_X F d\mu \geq h_{m_K} (\phi_t) + \int_X F dm_K $$
 which implies
 \begin{equation}\label{eqn:1}
  h_{\mu} (\phi_t) + \int_X F d\mu \geq \int_X F dm_K
 \end{equation}
 since $h_{m_K} (\phi_t) \geq 0$. Moreover, since the potential $F$ is constant on $K$ we have
 $$ \int_X F dm_K = c $$
 which can be written as
 \begin{equation}\label{eqn:2}
  \int_X F dm_K = c (\mu(V) + \mu(X \backslash V)).
 \end{equation}

 Combining equations \ref{eqn:1} and \ref{eqn:2}, we obtain
 $$ 
 \begin{array}{rcl}
  h_{\mu} (\phi_t) &\geq& \int_X F dm_K - \int_X F d\mu \\
                   &\geq & c (\mu(V) + \mu(X \backslash V)) - c\mu(V) - c' \mu(X \backslash V) )
 \end{array}
 $$ 
 which finally gives
 $$ \frac{h_{\mu}(\phi_t)}{c-c'} \geq \mu(X \backslash V). $$
\end{proof}

Recall that a sequence $\{\mu_n\}_{n \in \N}$ of probability measures on a Polish space $X$ is tight if for all 
$\epsilon >0$ there exists a compact set $K \subset X$ such that 
$$ \forall n \in \N, \text{  } \mu_n(X \backslash K)< \epsilon .$$
We give a criterion for the convergence of a sequence of probability measures to the Dirac measure supported on the periodic orbit 
$\O(p)$. We denote by $V_{\epsilon}$ the subset of $T^1 M$ defined by

$$ V_{\epsilon}= \{ x \in T^1M : d(x, \O(p)) \leq \epsilon \}.$$

\begin{lem} \label{lem:cv}
 The following assertions are equivalent:
 \begin{enumerate}
  \item The sequence of probability measures $\{m^{F_n} \}_{n \in \N}$ converges to the Dirac measure supported on $\O(p)$, 
  \item for all $\epsilon >0$,
 $$ \lim\limits_{n\to \infty} m^{F_n}(T^1 M \backslash V_{\epsilon}) =0 . $$
 \end{enumerate}
\end{lem}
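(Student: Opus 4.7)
The plan is to prove both implications separately; the argument is essentially soft, relying on Portmanteau together with the unique ergodicity of the geodesic flow on a single periodic orbit.

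For the implication $(1) \Rightarrow (2)$, I would introduce, for each $\epsilon > 0$, an explicit continuous cutoff
$$ f_{\epsilon}(x) = \min\left(1, \tfrac{1}{\epsilon} d(x,\O(p))\right), $$
which takes values in $[0,1]$, vanishes on $\O(p)$, and equals $1$ outside $V_{\epsilon}$. Since $f_\epsilon$ is bounded and continuous and $D_{\O(p)}$ is supported on $\O(p)$, weak convergence gives
$$ \int_{T^1 M} f_\epsilon \, dm^{F_n} \;\xrightarrow[n \to \infty]{}\; \int_{T^1 M} f_\epsilon \, dD_{\O(p)} = 0. $$
The bound $m^{F_n}(T^1 M \setminus V_\epsilon) \leq \int f_\epsilon \, dm^{F_n}$ then yields $(2)$.

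For the reverse implication $(2) \Rightarrow (1)$, the key point is that $V_\epsilon$ is compact in $T^1 M$: indeed $\O(p)$ is a compact closed geodesic, $M$ is complete, so by Hopf--Rinow applied to the Sasaki metric on $T^1 M$, the closed $\epsilon$-neighborhood of $\O(p)$ is compact. Therefore $(2)$ is exactly a tightness condition on the family $\{m^{F_n}\}_{n \in \N}$. By Prokhorov's theorem, any subsequence of $\{m^{F_n}\}$ admits a further subsequence converging weakly to some probability measure $\mu \in \M^1(T^1 M)$.

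It remains to identify every such limit $\mu$ with $D_{\O(p)}$. First, $\mu$ is $\phi_t$-invariant because each $m^{F_n}$ is and the identity $\int f \circ \phi_t \, dm^{F_n} = \int f \, dm^{F_n}$ passes to the limit for every bounded continuous $f$. Second, for each fixed $\epsilon > 0$, the set $T^1 M \setminus V_\epsilon$ is open, so by the Portmanteau theorem
$$ \mu(T^1 M \setminus V_\epsilon) \leq \liminf_{k \to \infty} m^{F_{n_k}}(T^1 M \setminus V_\epsilon) = 0 $$
thanks to $(2)$. Letting $\epsilon \to 0$ gives $\mu(\O(p)) = 1$, so $\mu$ is a $\phi_t$-invariant probability measure supported on the single periodic orbit $\O(p)$. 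But the only such measure is $D_{\O(p)}$ itself (the normalized length measure along the orbit). Since every weakly convergent subsequence has the same limit, the entire sequence $\{m^{F_n}\}$ converges weakly to $D_{\O(p)}$. There is no serious obstacle in this argument; the only subtleties are the correct direction of the inequality in Portmanteau (which forces us to work with the open complement rather than the closed set $V_\epsilon$) and the local compactness of $T^1 M$, both of which are straightforward once noticed.
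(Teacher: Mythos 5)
Your proof is correct and follows essentially the same route as the paper: tightness of $\{m^{F_n}\}$, Prokhorov's theorem, and identification of every subsequential limit with $D_{\O(p)}$ via invariance and the uniqueness of the invariant probability measure on a periodic orbit. You fill in two details the paper leaves implicit --- the explicit cutoff function for $(1)\Rightarrow(2)$ and the compactness of $V_\epsilon$, which lets you read condition $(2)$ directly as tightness rather than assembling a compact set from finitely many initial terms as the paper does --- but these are refinements of the same argument, not a different one.
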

\begin{proof}It is clear that $(1) \Rightarrow (2)$. Let us show that $(2) \Rightarrow (1)$.\\
  We first notice that $\{m^{F_n} \}_{n \in \N}$ is tight. Indeed,
  let $V$ be a compact subset of $T^1 M$ containing $\O(p)$. Since condition $(2)$ is satisfied, for all $\epsilon >0$, there exists $N_0 >0$ such that
  for all $n \geq N_0,$
  $$ m^{F_n} (T^1 M \backslash V ) < \epsilon. $$
  For all $n \in \{1,..,N_0-1 \}$, we can also find a compact set $K_n$ such that
  $$ m^{F_n} (T^1 M \backslash K_n) < \epsilon.$$
  Define the compact set $K = (\bigcup\limits_{n = 1}^{N_0 -1} K_n) \cup V$.\\
  For all $n \in \N$, $K$ satisfies
  $$ m^{F_n}(T^1 M \backslash K) \leq \epsilon. $$
  Therefore, the sequence $\{m^{F_n} \}_{n \in \N}$ is tight.\\
  Since condition $(2)$ is satisfied and using the fact that the unique invariant probability measure supported on $\O(p)$ is $D_{\O(p)}$, each converging subsequence
  of $\{m^{F_n}\}$ converges to $D_{\O(p)}$.\\
  Therefore, by Prokhorov's Theorem \cite{Pro}, each sub sequence of $\{m^{F_n}\}$ possesses a further subsequence converging weakly to $D_{\O(p)}$ so the sequence
  $m^{F_n}$ converges weakly to $D_{\O(p)}. $
\end{proof}

We are now able to prove our main result of convergence. 
\begin{thm}
 The sequence $\{m^{F_n}\}_{n \in \N}$ of probability measures converges to the Dirac measure supported on $\O(p)$.
\end{thm}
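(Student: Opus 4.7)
The plan is to reduce the theorem to Lemma \ref{lem:cv} and then bound $m^{F_n}(T^1M \setminus V_\epsilon)$ using the variational lemma \ref{lem:majvar}. By Lemma \ref{lem:cv}, it suffices to prove that for every $\epsilon > 0$,
$$ \lim_{n \to \infty} m^{F_n}(T^1 M \setminus V_\epsilon) = 0. $$
So fix $\epsilon > 0$ and let $\O(p)$ play the role of $K$, $V_\epsilon$ the role of $V$, and $F_n$ the role of $F$ in Lemma \ref{lem:majvar}.

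First I would check the hypotheses of Lemma \ref{lem:majvar}. By construction $F_n \equiv c_n$ on $\O(p)$ and $c_n = \sup F_n$. For $x \notin V_\epsilon$ one has $d(\O(p), x) > \epsilon$, so $c_n - c_n d(\O(p), x) < c_n(1-\epsilon)$; hence $F_n(x) \leq \max\bigl(c_n(1-\epsilon),\ \sup H\bigr)$. For all $n$ large enough that $c_n(1-\epsilon) \geq \sup H$ (using $c_n \to \infty$), this gives $c'_n = c_n(1-\epsilon)$ and therefore $c_n - c'_n = c_n \epsilon$. Next I would verify that $m^{F_n}$ is an equilibrium state for $F_n$: since $\O(p) \subset T^1 M_0$, the potential $F_n$ coincides with $H$ on every cusp, hence $(\Gamma_{\xi_p}, F_n) = (\Gamma_{\xi_p}, H)$ is of divergence type for every parabolic fixed point $\xi_p$; by the proposition of Section 4 the Gibbs measure $m_{F_n}$ is finite, and since $F_n$ is bounded the negative part is trivially integrable, so Theorem \ref{thm:varp} identifies $m^{F_n}$ as the unique equilibrium state.

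Applying Lemma \ref{lem:majvar} then yields, for every sufficiently large $n$,
$$ m^{F_n}(T^1 M \setminus V_\epsilon) \leq \frac{h_{m^{F_n}}(\phi_t)}{c_n \epsilon}. $$
The numerator is uniformly bounded: by the classical variational principle and the identification $\htop(\phi_t) = \delta_{\Gamma}$ (Otal's theorem, recalled in the preceding paragraph), every invariant probability measure satisfies $h_{m^{F_n}}(\phi_t) \leq \delta_{\Gamma} < \infty$. Hence
$$ m^{F_n}(T^1 M \setminus V_\epsilon) \leq \frac{\delta_{\Gamma}}{c_n \epsilon} \xrightarrow[n\to\infty]{} 0, $$
since $c_n \to \infty$ by the choice of the sequence $(c_n)$. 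Lemma \ref{lem:cv} then gives $m^{F_n} \rightharpoonup D_{\O(p)}$.

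The main obstacle is really the bookkeeping around the equilibrium-state hypothesis of Lemma \ref{lem:majvar}: one must be confident that $m^{F_n}$ is genuinely the equilibrium measure for $F_n$, which in turn requires finiteness of $m_{F_n}$, and this is the reason the potential $H$ was engineered in Section~5 to keep the parabolic subgroups of divergence type after the bump near $\O(p)$ is added. Once finiteness is in place, the bound $h_{m^{F_n}}(\phi_t) \leq \delta_{\Gamma}$ is cheap and the uniform denominator $c_n \epsilon$ does the rest.
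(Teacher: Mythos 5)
Your proposal is correct and follows essentially the same route as the paper: reduce via Lemma \ref{lem:cv} to controlling $m^{F_n}(T^1M\setminus V_\epsilon)$, apply Lemma \ref{lem:majvar} with $K=\O(p)$, $V=V_\epsilon$, and the bound $h_{m^{F_n}}(\phi_t)\leq \delta_\Gamma$, and use $c_n-c'_n\geq c_n\epsilon$ for $n$ large. You even make explicit a point the paper leaves implicit, namely why $m^{F_n}$ is indeed the equilibrium state (finiteness of the Gibbs measure because $F_n$ agrees with $H$ deep in the cusps).
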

\begin{proof}
 Let $D_{\O(p)}$ be the Dirac measure supported on the periodic orbit $\O(p)$. Recall that $c_n = \sup\limits_{x \in T^1 M} F_n(x)$.\\
 By lemma \ref{lem:cv}, we
 have to show that 
 $$\lim\limits_{n\to \infty} m^{F_n}(T^1 M \backslash V_{\epsilon}) =0 .$$
 Using the variational principle described in Theorem \ref{thm:varp}, we have
 $$ \delta_{\Gamma,F_n} = \sup\limits_{\mu \in \M^1(T^1M)} P_{\Gamma,F_n}(\mu) =  P_{\Gamma,F_n}(m^{F_n})$$
 and 
 $$ \delta_{\Gamma,F_n} \geq P_{\Gamma,F_n}(D_{\O(p)}) = h_{D_{\O(p)}}(\phi_t) +\int_{T^1 M} F_n dD_{\O(p)} .$$
 Since $\O(p)$ is invariant by the action of the geodesic flow and 
 $$h_{m_{F_n}} (\phi_t) \leq \delta_{\Gamma} < \infty,$$
 one can use the claim of lemma \ref{lem:majvar} to obtain the following inequality
 $$ m^{F_n}(T^1 M \backslash V_{\epsilon}) \leq \frac{\delta_{\Gamma}}{c_n -c'_n}$$
 where 
 $$c'_n = \sup\limits_{x \in T^1M \backslash V_{\epsilon}} F_n(x) .$$
 By the definition of the potential $F_n$, we know that
 $$c'_n \leq \max\{c_0,c_n(1-\epsilon) \}.$$
  Therefore, for all $\epsilon >0$ and $n$ large enough,
 $$m^{F_n}(T^1 M \backslash V_{\epsilon}) \leq \frac{\delta_{\Gamma}}{c_n \epsilon}$$
 implies that 
 $$ \lim\limits_{n\to \infty} m^{F_n}(T^1 M \backslash V_{\epsilon}) =0 .$$
 Which concludes the proof. 
\end{proof}

 Finally, we are able to prove corollary \ref{cor:gef} which states that the set of weak-mixing measures is a dense $G_{\delta}$ subset
 of $\M^1(T^1 M)$.\\
 Our proof relies on the following theorem.
 \begin{thm}(Coudène-Schapira \cite{MR3322793})
  Let $(\varphi^t)_{t\in \R}$ be a continuous flow on a Polish space. The set of weak-mixing measures on $X$ is a $G_{\delta}$ subset 
  of the set of Borel invariant probability measures on $X$.
 \end{thm}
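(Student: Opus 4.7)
The plan is to reduce the claim to the analogous statement for ergodic measures applied to the product flow. The key classical equivalence is that a $\varphi^t$-invariant Borel probability $\mu$ on $X$ is weak-mixing if and only if the product $\mu \otimes \mu$ is ergodic for the product flow $\varphi^t \times \varphi^t$ on the Polish space $X \times X$. Since the map $\mu \mapsto \mu \otimes \mu$ from the $\varphi$-invariant probabilities on $X$ to the $(\varphi \times \varphi)$-invariant probabilities on $X \times X$ is continuous for the weak topology (checked on tensor products of bounded continuous test functions), and since continuous preimages of $G_{\delta}$ sets are $G_{\delta}$, the statement reduces to showing that the set of ergodic invariant probability measures of any continuous flow on a Polish space is a $G_{\delta}$ subset of the invariant probabilities.

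For this reduced statement, fix a countable family $\{F_n\} \subset C_b(X \times X)$ which is dense in $L^2(\nu)$ for every Borel probability $\nu$ (such a family exists on any Polish space, e.g.\ via bounded Lipschitz functions built from a countable dense set and a compatible metric). For an invariant probability $\nu$, the mean ergodic theorem applied to the Koopman operator on $L^2(\nu)$ ensures that
$$H_{n,T}(\nu) := \frac{1}{T} \int_0^T \int F_n \circ (\varphi^t \times \varphi^t) \cdot F_n \, d\nu \, dt$$
converges, as $T \to \infty$, to $\|P_\nu F_n\|_{L^2(\nu)}^2$, where $P_\nu$ is the orthogonal projection onto the $(\varphi \times \varphi)$-invariant subspace. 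One always has $\|P_\nu F_n\|_{L^2(\nu)}^2 \geq \left( \int F_n \, d\nu \right)^2$, with equality iff $P_\nu F_n$ equals the constant $\int F_n \, d\nu$, and $\nu$ is ergodic iff this equality holds for every $n$. Both $\nu \mapsto H_{n,T}(\nu)$ and $\nu \mapsto \left( \int F_n \, d\nu \right)^2$ are continuous in the weak topology, and, since $\lim_T H_{n,T}(\nu)$ exists and dominates $\left( \int F_n \, d\nu \right)^2$, the equality $\lim_T H_{n,T}(\nu) = \left( \int F_n \, d\nu \right)^2$ collapses to the one-sided $\liminf_T H_{n,T}(\nu) \leq \left( \int F_n \, d\nu \right)^2$, which rewrites as
$$\bigcap_{k \geq 1} \bigcap_{N \geq 1} \bigcup_{T \in \N,\, T \geq N} \Bigl\{ \nu : H_{n,T}(\nu) < \left( \int F_n \, d\nu \right)^2 + 1/k \Bigr\}.$$
This is a countable intersection of open sets, hence $G_{\delta}$, and intersecting further over $n$ preserves the $G_{\delta}$ property.

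The main obstacle I foresee is precisely this last rewriting, namely the collapse from the a priori $F_{\sigma\delta}$-type condition ``$\lim$ equals a continuous function of $\nu$'' to the clean $G_{\delta}$ condition ``$\liminf$ lies below that function''. It relies on the two inputs produced by the mean ergodic theorem: existence of the limit and the lower bound $\|P_\nu F_n\|_{L^2(\nu)}^2 \geq \left( \int F_n \, d\nu \right)^2$. A secondary technicality, standard but worth care on general Polish spaces, is exhibiting a countable family $\{F_n\}$ that is simultaneously dense in every $L^2(\nu)$. With these pieces in place, pulling back via the continuous map $\mu \mapsto \mu \otimes \mu$ yields the desired $G_{\delta}$ description of the weak-mixing measures on $X$.
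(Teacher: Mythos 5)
The paper does not prove this statement at all: it is quoted as a black box from Coudène--Schapira \cite{MR3322793}, so there is no in-paper argument to compare against. Your proof is, as far as I can check, correct, and it takes a genuinely different route from the cited one. The Coudène--Schapira argument works directly on $X$: for a countable family $\{f_n\}$ of bounded Lipschitz functions dense in $L^2(\nu)$ for every Borel probability $\nu$, the spectral theorem together with Wiener's lemma shows that for any invariant $\nu$ the Cesàro average $\frac{1}{T}\int_0^T \bigl|\int f_n\circ\varphi^t\, f_n\, d\nu - (\int f_n\, d\nu)^2\bigr|\, dt$ actually converges (its limit is controlled by the atoms of the spectral measure of $f_n - \int f_n d\nu$), so weak mixing collapses to the one-sided condition that the $\liminf$ vanishes for every $n$, which is visibly $G_\delta$. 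You instead route through the classical equivalence ``$\mu$ weak-mixing $\Leftrightarrow$ $\mu\otimes\mu$ ergodic for the diagonal flow'' and prove that ergodic measures form a $G_\delta$ via the von Neumann mean ergodic theorem; note that the same structural trick (the limit exists and dominates a continuous functional of $\nu$, so equality is a $\liminf$, hence $G_\delta$, condition) is doing the work in both proofs. What your reduction buys is that you avoid the spectral measure entirely; what it costs is two extra inputs you should make explicit: the product characterization of weak mixing (valid here because a Borel probability on a Polish space makes a standard probability space), and the weak continuity of $\mu\mapsto\mu\otimes\mu$, which on a non-compact Polish space requires observing that a weakly convergent sequence is tight (Prokhorov) before tensor-product test functions determine the limit; one should also note that this map carries invariant measures to invariant measures of the product flow. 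Both proofs rely on the same technical lemma, which you rightly flag: a single countable family of bounded continuous functions dense in $L^2(\nu)$ simultaneously for all $\nu$, obtained from bounded Lipschitz functions for a totally bounded compatible metric. I see no gap; the argument is sound.
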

 
 \begin{proof}(of corollary \ref{cor:gef})\\
 Since mixing measures are weak-mixing, Theorem \ref{thm:princ} implies that weak-mixing measures are dense in the set of probability 
 measures on $\Omega$. The previous theorem insures us that it is a $G_{\delta}$ set.
 
 Since negatively curved manifolds with pinched curvature satisfy the closing lemma, it is shown in \cite{MR2735038} that the set of
 invariant measures with full support on $\Omega$ is a dense $G_{\delta}$ subset of the set of invariant probability measures on $\Omega$.
 
 The intersection of those two dense $G_{\delta} $ sets is a dense $G_{\delta} $ set by the Baire Category Theorem.   
 \end{proof}
 
\section{Non-geometrically finite manifolds with cusps}

We now prove corollaries \ref{cor:geinf} and \ref{cor:surf}.
First of all, remark that since the manifold possesses a cusp then from lemma \ref{lem:mix}, the geodesic flow is topologically mixing on $T^1M= T^1\widetilde{M} /
\Gamma$. \\
Let $\gamma \in \Gamma$ and $D(\gamma)$ the subset of $\widetilde{M}$ bounded by
$$\{ \widetilde{x} \in \widetilde{M} : d(\widetilde{x},\widetilde{x}_0) = d(\widetilde{x},\gamma \widetilde{x}_0) \}$$
and containing $\gamma \widetilde{x_0}$. We define the subset $C_{\gamma}$ on $\partial_{\infty} \widetilde{M}$ as follows.
$$ C_{\gamma} = \partial_{\infty} \widetilde{M} \cap \overline{D(\gamma)}.$$

The proof of corollary \ref{cor:geinf} is deduced from the following result.
\begin{lem}\label{lem:V_0}
 Let $M$ be a connected, complete pinched negatively curved manifold with a cusp and $\phi_t$ the geodesic flow defined on $T^1 M$.
 There exists a geometrically finite manifold $\hat{M}$ such that its geodesic flow $\hat{\phi}_t$ is topologically mixing and a
 covering map $ \rho: T^1 \hat{M} \to T^1M $ such that the diagram
 $$
 \begin{CD}
  T^1 \hat{M} @>{\hat{\phi}_t}>> T^1 \hat{M} \\
  @V{\rho}VV  @VV{\rho}V \\
  T^1 M @>{\phi_t}>> T^1 M
 \end{CD}
 $$
 commutes.
\end{lem}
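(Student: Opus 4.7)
The plan is to realize $\hat{M}$ as a quotient $\widetilde{M}/\hat{\Gamma}$ for a carefully chosen geometrically finite subgroup $\hat{\Gamma}<\Gamma$ containing the parabolic stabilizer of the cusp. The covering map $\rho$ and the commuting square are then automatic from the inclusion $\hat{\Gamma}\hookrightarrow\Gamma$: the geodesic flow on $T^1\widetilde{M}$ is equivariant under any subgroup of isometries, so it descends to both $T^1 M$ and $T^1\hat{M}$ compatibly with the natural projection between them.

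Concretely, let $\xi_p\in\partial_\infty\widetilde{M}$ be the parabolic fixed point corresponding to the given cusp of $M$, and let $\Gamma_{\xi_p}<\Gamma$ be its maximal parabolic stabilizer. Since $\Gamma$ is non-elementary there exists a hyperbolic $h\in\Gamma$ whose fixed points $h^\pm$ on $\partial_\infty\widetilde{M}$ are disjoint from the $\Gamma_{\xi_p}$-orbit of $\xi_p$. A Klein combination (ping-pong) argument, applied to small shadows around $\xi_p$ and around $h^\pm$ after replacing $h$ by a sufficiently large power $h^N$, will produce
$$ \hat{\Gamma} \ :=\ \langle\,\Gamma_{\xi_p},\,h^N\,\rangle \ <\ \Gamma $$
as a discrete group isomorphic to the free product $\Gamma_{\xi_p}*\langle h^N\rangle$. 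The combination theorem, in the form adapted to variable pinched negative curvature, will further yield geometric finiteness of $\hat{\Gamma}$, with limit set $\Lambda\hat{\Gamma}=\Lambda_c\hat{\Gamma}\cup\hat{\Gamma}\!\cdot\!\xi_p$ and $\xi_p$ a bounded parabolic fixed point whose stabilizer in $\hat{\Gamma}$ is exactly $\Gamma_{\xi_p}$.

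With $\hat{M}:=\widetilde{M}/\hat{\Gamma}$ so defined, the inclusion $\hat{\Gamma}\hookrightarrow\Gamma$ induces a Riemannian covering $\hat{M}\to M$ which differentiates to the required covering $\rho:T^1\hat{M}\to T^1 M$. Since both $\hat{\phi}_t$ and $\phi_t$ are induced by the geodesic flow on $T^1\widetilde{M}$, the commutation $\rho\circ\hat{\phi}_t=\phi_t\circ\rho$ follows at once. Finally, because $\hat{\Gamma}$ contains the parabolic isometries of $\Gamma_{\xi_p}$, Lemma \ref{lem:mix}(3) ensures topological mixing of $\hat{\phi}_t$ on its non-wandering set, so Theorem \ref{thm:princ} applies to $\hat{M}$.

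The main obstacle is establishing geometric finiteness of $\hat{\Gamma}$. In constant curvature this is Maskit's combination theorem, but in variable pinched negative curvature one must verify by hand that ping-pong normal forms give more than discreteness and the free-product structure: every limit point must be shown to be either conical or a $\hat{\Gamma}$-translate of $\xi_p$ with bounded parabolic behaviour. The right tool is a shadow argument based on Proposition \ref{prop:major}, combined with the observation that $\Gamma_{\xi_p}$ already acts cocompactly on suitable horospherical sections at $\xi_p$ inside $M$, which transfers cocompactness to $\hat{M}$ and leaves only the conical limit points to control through the ping-pong dynamics of $h^N$.
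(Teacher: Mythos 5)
Your proposal follows essentially the same route as the paper: build a geometrically finite Schottky-type subgroup of $\Gamma$ by playing ping-pong between a parabolic fixing the cusp point and a large power of a hyperbolic element, set $\hat{M}=\widetilde{M}/\hat{\Gamma}$, observe that the covering $\rho$ and the commuting square are automatic from the inclusion $\hat{\Gamma}<\Gamma$, and get topological mixing from Lemma \ref{lem:mix}(3). The only variation is that you amalgamate the full stabilizer $\Gamma_{\xi_p}$ with $\langle h^N\rangle$ whereas the paper uses only the cyclic group generated by a single parabolic $\gamma_p^N$; both give geometric finiteness, and your explicit remark that one must check geometric finiteness (not merely discreteness and the free-product structure) addresses a point the paper leaves implicit.
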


\begin{proof}
Let $\xi_p \in \partial_{\infty}\widetilde{M}$ be a bounded parabolic point fixed by a parabolic isometry $\gamma_p \in \Gamma$ and 
$h \in \Gamma$ be a hyperbolic transformation.\\
Let $N>0$ be defined such that the sets $C_{\gamma_p^N} , C_{h^N}, C_{\gamma_p^{-N}} , C_{h^{-N}}$ have 
disjoint interiors. We define $\Gamma_0 =$ $<\gamma_p^N , h^N>$, a subgroup of $\Gamma$. The ping-pong lemma shows that $\Gamma_0$ is a
discrete group which acts freely discontinuously on $\widetilde{M}$. So, the quotient $\widetilde{M} / \Gamma_0$ is a geometrically
finite manifold.
\end{proof}

\begin{proof}(of corollary \ref{cor:geinf})
Let $\hat{M}$ be a geometrically finite manifold constructed as in lemma \ref{lem:V_0} and $\rho$ its associated covering map. We
can use the proof of Theorem \ref{thm:princ} on $\hat{M}$ and construct a sequence $\hat{m}_k$ of invariant mixing measures for 
$\hat{\phi}_t$ such that $\hat{m}_k \rightharpoonup D_{\O(p)}$.\\
Since the geodesic flow $\phi_t$ is a factor of $\hat{\phi}_t$, we can define $\nu_k$ to be the push-forward by $\rho$
of $\hat{m}_k$, then it is an invariant mixing measure on $T^1M$ and for all bounded continuous function $g$ on $T^1 M$,
$$ 
\begin{array}{rcl}
 \lim\limits_{k \to \infty} \int_{T^1M} g d\nu_k &=& \lim\limits_{k \to \infty} \int_{T^1\hat{M}} g\circ \rho dm_k \\
  &=& \int_{T^1 \hat{M}} g\circ \rho dD_{\O(p)} \\
  &=& \int_{T^1 M} g d(\rho_* D_{\O(p)}).
\end{array}
$$
\end{proof}

We end up by the proof of corollary \ref{cor:surf}. In the case of a surface $S$ (or a constant negatively curved manifold), we don't 
need to ask for the existence of a bounded parabolic point. Since the geodesic flow is always topologically mixing in restriction to its
non wandering set, we can choose two hyperbolic isometries $h_1, h_2$ in $\Gamma$ such that the subgroup
$\Gamma_0 = <h_1, h_2>$ is convex-cocompact.\\
The same proof as lemma \ref{lem:V_0} shows us that the geodesic flow $\phi_t$ on $S$ is a factor of the geodesic flow $\hat {\phi_t}$ 
on the convex-cocompact manifold $T^1 S_0 = T^1\widetilde{S} / \Gamma_0 $. Therefore, the previous proof gives us the density result.

\newpage

\end{document}